\newcommand{\pa}[2]{\frac{\partial #1 }{\partial #2}}
\newcommand{\supp}[1]{\mathrm{supp}\left(#1\right)}
\newcommand{\aA}{{{\scriptstyle{}^{\scriptstyle a}}\!\!\mathcal{A}}}
\newcommand{\cj}[1]{\overline{#1}}
\DeclareMathOperator{\C}{\mathbb{C}}
\DeclareMathOperator{\R}{\mathbb{R}}
\DeclareMathOperator{\N}{\mathbb{N}}
\DeclareMathOperator{\F}{\mathbb{F}}
\DeclareMathOperator{\vL}{\mathcal{L}}
\DeclareMathOperator{\vR}{\mathcal{R}}
\DeclareMathOperator{\ad}{ad}
\DeclareMathOperator{\divides}{ \, \big|}
\DeclareMathOperator{\sgn}{\mathrm{sgn}}
\DeclareMathOperator{\foreach}{\mathrm{ \ \forall }}
\DeclareMathOperator{\inv}{^{-1}}
\newtheorem{theorem}{Theorem}[section]
\newtheorem{proposition}[theorem]{Proposition}
\newtheorem{corollary}[theorem]{Corollary}
\newtheorem{lemma}[theorem]{Lemma}
\newtheorem{remark}[theorem]{Remark}
\newenvironment{manualtheorem}[1]{%
  \manualtheoreminner
}{\endmanualtheoreminner}
\theoremstyle{definition}
\newtheorem{definition}{Definition}[section]
\theoremstyle{definition}
  \DeclareFontFamily{U}{txsymbols}{}
  \DeclareFontFamily{U}{txAMSb}{}
  \DeclareSymbolFont{txsymbols}{OMS}{txsy}{m}{n}
  \DeclareSymbolFont{txAMSb}{U}{txsyb}{m}{n}
  \DeclareMathSymbol{\aleph}{\mathord}{txsymbols}{64}
  \DeclareMathSymbol{\beth}{\mathord}{txAMSb}{105}
  \DeclareMathSymbol{\gimel}{\mathord}{txAMSb}{106}
  \DeclareMathSymbol{\daleth}{\mathord}{txAMSb}{107}
\title{Invariant Geometric Structures on Complex Almost Abelian Lie Groups}
\author[Z. Avetisyan]{Zhirary Avetisyan}
\author[A. Brauer]{Abigail Brauer}
\author[O. Buran]{Oderico-Benjamin Buran}
\author[J. Morentin]{Jimmy Morentin}
\author[T. Wang]{Tianyi Wang}
\begin{document}

\begin{abstract}
An almost Abelian group is a non-Abelian Lie group with a codimension 1 Abelian subgroup. This paper investigates invariant Hermitian and K\"{a}hler structures on connected complex almost Abelian groups. We find explicit formulas for the left and right Haar measures, the modular function, and left and right generator vector fields on simply connected complex almost Abelian groups. From the generator fields, we obtain invariant vector and tensor field frames, allowing us to find an explicit form for all invariant tensor fields. Namely, all such invariant tensor fields have constant coefficients in the invariant frame. From this, we classify all invariant Hermitian forms on complex simply connected almost Abelian groups, and we prove the nonexistence of invariant K\"{a}hler forms on all such groups. Via constructions involving the pullback of the quotient map, we extend the explicit description of invariant Hermitian structures and the nonexistence of K\"{a}hler structures to connected almost Abelian groups.
\end{abstract}

\maketitle

\section{Introduction}
An almost Abelian Lie algebra is a non-Abelian Lie algebra with a codimension one Abelian ideal. In analogy, an almost Abelian Lie group is a Lie group with a codimension 1 Abelian subgroup; almost Abelian Lie groups have almost Abelian Lie algebras. The present paper only considers complex almost Abelian Lie groups and algebras. 

Almost Abelian groups include important classes of groups including groups having Lie algebras of type Bi(II)-Bi(VII) and the Heisenberg group. Almost Abelian groups also arise in cosmological models, dynamical systems, integrable systems, PDEs, and solvmanifolds. This last application is of particular relevance to this paper, and, furthermore, almost Abelian solvmanifolds have seen extensive study in recent years (in particular complex almost Abelian solvmanifolds); see: \cite{Andriot_2011}, \cite{Andrada_2017}, \cite{freibert2011cocalibrated}, \cite{fino2022balanced}, \cite{Fino_2021}, and \cite{Stanfield_2021}.

A solvmanifold is a quotient $G/H$ of a simply connected solvable Lie group $G$ and a discrete subgroup $H$. Since (complex) connected almost Abelian Lie groups are solvable, we can consider almost Abelian solvmanifolds. This paper studies invariant structures on complex simply connected almost Abelian groups and generalizes them to connected almost Abelian groups (being quotients of the simply connected groups by discrete subgroups), and we hope to eventually generalize these results to almost Abelian solvmanifolds.

The following results are found in this paper. Let $G$ be a simply connected complex almost Abelian group. In Prop \ref{HaarMeasureProp}, the explicit formula for the left and right Haar measures and the associated modular function are found. In Corollary \ref{GenVecFields} we find explicit forms for the left and right generator fields of $G$, from which we obtain explicit invariant tensor and vector field frames for $G$. In Prop \ref{ConstantCoeffTensorFields} it is shown that all invariant tensor fields on $G$ have constant coefficients in the invariant frame. In particular, all invariant Hermitian forms on $G$ have constant coefficients in the invariant frame (Corollary \ref{InvariantHermitianMetricsHaveConstantCoeffs}). These results are used to prove our main theorem:
\begin{theorem} \label{NonexistKahlerSimpConn}
There are no left-invariant K\"{a}hler metrics on a simply connected almost Abelian group. 
\end{theorem}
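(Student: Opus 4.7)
The plan is to use Corollary~\ref{InvariantHermitianMetricsHaveConstantCoeffs} to reduce the closedness condition $d\omega=0$ for a putative invariant K\"ahler form to a finite-dimensional algebraic equation, and then show that this equation forces the structure operator of the almost Abelian Lie algebra to vanish, contradicting non-Abelianness.

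Fix a basis $e_0,e_1,\ldots,e_n$ of the complex almost Abelian Lie algebra $\mathfrak{g}$, with $\mathfrak{a}$ the codimension-one Abelian ideal spanned by $e_1,\ldots,e_n$, and set $A:=\ad(e_0)|_{\mathfrak{a}}\in\mathrm{End}_{\C}(\mathfrak{a})$; non-Abelianness of $\mathfrak{g}$ is precisely $A\neq 0$. Let $\theta^0,\ldots,\theta^n$ be the dual holomorphic left-invariant coframe on $G$. The Maurer--Cartan equations then read
\[
d\theta^0 = 0,\qquad d\theta^k = -A^k{}_j\,\theta^0\wedge\theta^j\quad (k\geq 1),
\]
and are of pure type $(2,0)$, since each $\theta^i$ is holomorphic on the complex Lie group $G$.

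Suppose for contradiction that a left-invariant K\"ahler metric $h$ exists, with associated $(1,1)$-form $\omega$. By Corollary~\ref{InvariantHermitianMetricsHaveConstantCoeffs} the coefficients $h_{i\bar{j}}$ in the invariant frame are constants, so $\omega = i\,h_{i\bar{j}}\,\theta^i\wedge\cj{\theta^j}$. Because $d\theta^i$ is of type $(2,0)$ and $d\cj{\theta^j}$ of type $(0,2)$, the exterior derivative decomposes as $d\omega = (d\omega)^{(2,1)} + (d\omega)^{(1,2)}$, and the two pieces must vanish independently. The $(2,1)$ piece is
\[
(d\omega)^{(2,1)} = -i\sum_{k\geq 1,\, l\geq 1,\, j\geq 0} h_{k\bar{j}}\,A^k{}_l\,\theta^0\wedge\theta^l\wedge\cj{\theta^j},
\]
so its vanishing forces $\sum_{k\geq 1} h_{k\bar{j}}\,A^k{}_l = 0$ for all $j\geq 0$ and all $l\geq 1$. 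Restricting to $j\geq 1$ produces a matrix equation of the form $\cj{H}\,A=0$, where $H=(h_{k\bar{j}})_{k,j\geq 1}$ is the restriction of the Hermitian matrix of $h$ to $\mathfrak{a}$. As the restriction of a positive-definite Hermitian form to a subspace, $H$ is positive-definite and in particular invertible, which forces $A=0$ and contradicts non-Abelianness.

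The conceptually difficult step --- converting a differential condition on a non-compact Lie group into a finite-dimensional algebraic statement --- is already handled upstream by the constant-coefficient propositions; what remains is the short computation of $d\omega$ via Maurer--Cartan together with positive-definiteness. The only technical nuance to verify is that the invariant frame from the earlier sections is compatible with the holomorphic/antiholomorphic splitting, so that $\omega$ genuinely decomposes as $i\,h_{i\bar j}\,\theta^i\wedge\cj{\theta^j}$ with holomorphic $\theta^i$; this is immediate because the complex structure on $G$ is itself left-invariant and therefore has constant coefficients in the invariant frame.
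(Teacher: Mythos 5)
Your argument is correct and is essentially the paper's own second proof (Proof~II of Theorem~\ref{NonexistKahlerSimpConn}) in dual form: where the paper evaluates $d\omega$ on the invariant frame via the Koszul formula and invokes non-degeneracy of $\omega$ to kill $\ad_{e_0}$, you differentiate the invariant coframe via the Maurer--Cartan equations and invoke positive-definiteness of the block of $h$ on the Abelian ideal to kill $A=\ad(e_0)|_{\mathfrak{a}}$ --- the same reduction to a finite-dimensional algebraic condition via the constant-coefficient result, Corollary~\ref{InvariantHermitianMetricsHaveConstantCoeffs}. No gaps.
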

Lastly, these results are generalized in \S\ref{B4} to complex connected almost Abelian groups.

\section{Preliminaries and Basic Properties}\label{B2}

We briefly recall some fundamentals of (complex) almost Abelian Lie algebras, and we refer the reader to \cite{Ave18} for more details. An almost Abelian Lie algebra is a Lie algebra containing a codimension 1 Abelian subalgebra. We use the formal device of an $\N$-graded multiplicity function to completely capture the data of a finite dimensional complex almost Abelian Lie algebra.

Let $\F$ be a field, and let $\sigma_{\F}\subset \F[X]$ denote the set of monic irreducible polynomials with coefficients in $\F$. A \textit{$\N$-graded multiplicity function} is a map $\aleph:\sigma_{\F}\times \N\to \mathcal{C}$, where $\mathcal{C}$ denote the class of cardinals. In the case where $\F=\C$, every monic irreducible polynomial $p_\mu\in \sigma_{\C}$ is of the form $p_\mu(X)=X-\mu$ where $\mu\in \C$. Therefore we can identify $\sigma_{\C}$ with $\C$ and view $\aleph:\C\times \N\to \mathcal{C}$.

We define 
$$\supp{\aleph}=\qty{p\in \C[X]: \exists n\in \N\text{ such that }\aleph(p,n)\neq 0}.$$
Then, for a polynomial $p_\mu\in \sigma_{\C}$, we define $J(p,n)=\mu \mathds{1}+N_n$, where $N_n\in M_{n\times n}(\C)$ with 1's on the super diagonal and zeros everywhere else. Then we may define
$$
J(\aleph)=\bigoplus_{p\in \supp{\aleph}}\bigoplus_{n=1}^\infty\bigoplus_{\aleph(p,n)}J(p,n).
$$

It is shown in \cite{Ave18} that a finitely supported $\N$-graded multiplicity function $\aleph$ uniquely and completely determines the structure of a complex almost Abelian Lie algebra by determining the Jordan matrix $J(\aleph)$ which serves as a matrix representation for $\ad_{e_0}\in \text{End}(\C^d)$, where $e_0=(0,1)\in \C^d\rtimes \C$.

Thus, the data of a finitely supported $\N$-graded multiplicity function is the same as the data of a finite-dimensional almost Abelian Lie algebra. Namely, a finitely supported multiplicity function $\aleph$ is the same as the data of the almost Abelian Lie algebra $\aA(\aleph)=\mathbf{V}\rtimes_{\ad_{e_0}} \C e_0$, where $\ad_{e_0}=J(\aleph)$ and $\mathbf{V}=\C^{\dim_{\C}(\aleph)}$.

Having established the formal apparatus of multiplicity functions, we recall Proposition 3.3 from \cite{avetisyan2023topological} that the simply connected representation of a complex almost Abelian group with finitely supported multiplicity function $\aleph$ is given by:
\begin{proposition} \label{SimpConnRep}
For a finitely supported multiplicity function $\aleph$, let
\begin{equation*}
    G \coloneqq \left\{ 
    \begin{pmatrix}
    1 & 0 & 0 \\
    v & e^{tJ(\aleph)} & 0 \\
    t & 0 & 1
    \end{pmatrix} 
    \: \middle| \ (v,t) \in \C^d \oplus \C \right\}
\end{equation*}
Then $G$ is a complex simply connected Lie group with Lie algebra isomorphic to $\aA(\aleph)$.
\end{proposition}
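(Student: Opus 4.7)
My plan is to establish the three assertions of the proposition — that $G$ is a complex Lie group, that it is simply connected, and that its Lie algebra is isomorphic to $\aA(\aleph)$ — through an explicit parametrization $\phi:\C^d\oplus\C\to G$ given by the displayed matrix formula. I will verify $G$ is a subgroup by a block computation, then show $\phi$ is a biholomorphism exhibiting $G$ as an embedded Lie subgroup of $GL_{d+2}(\C)$, and finally identify the transferred Lie bracket with that of $\aA(\aleph)$.

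First I would compute the product $\phi(v_1,t_1)\phi(v_2,t_2)$ using the block structure $(1+d+1)\times(1+d+1)$. The identity $e^{t_1 J(\aleph)}e^{t_2 J(\aleph)}=e^{(t_1+t_2)J(\aleph)}$ collapses the middle block and the product simplifies to $\phi\bigl(v_1+e^{t_1 J(\aleph)}v_2,\,t_1+t_2\bigr)$. Hence $G$ is closed under multiplication, $\phi(0,0)=I_{d+2}$ is the identity, and $\phi(v,t)^{-1}=\phi(-e^{-tJ(\aleph)}v,-t)$ lies in $G$, so $G\subset GL_{d+2}(\C)$ is a subgroup, and $\phi$ transports onto $\C^{d+1}$ the semidirect-product law $\C^d\rtimes_{e^{tJ(\aleph)}}\C$.

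Next I would observe that $\phi$ is entire and that the inverse $\phi^{-1}$ just reads off matrix entries: $t$ is the $(d+2,1)$-entry and the components of $v$ are the $(2,1),\dots,(d+1,1)$ entries. Thus $\phi$ is a biholomorphism onto $G$ with the subspace topology, exhibiting $G$ as an embedded complex Lie subgroup of $GL_{d+2}(\C)$; simple connectedness then follows immediately from $G$ being diffeomorphic to the contractible space $\C^{d+1}$.

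For the Lie algebra, I would differentiate the one-parameter subgroups $t\mapsto\phi(0,t)$ and $s\mapsto\phi(sv,0)$ at the origin, obtaining tangent vectors at the identity playing the roles of $e_0$ and $v\in\mathbf V$ in $\aA(\aleph)=\mathbf V\rtimes_{J(\aleph)}\C e_0$. The conjugation identity $\phi(0,t)\phi(v,0)\phi(0,-t)=\phi(e^{tJ(\aleph)}v,0)$ yields $\Ad(\phi(0,t))v=e^{tJ(\aleph)}v$; differentiating in $t$ at $0$ recovers $\ad_{e_0}v=J(\aleph)v$, while $[\mathbf V,\mathbf V]=0$ is clear from the abelian direction. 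This reproduces the bracket of $\aA(\aleph)$ exactly. The only step with any length is the block multiplication at the start; the main obstacle I anticipate is simply careful bookkeeping of the $1+d+1$ block decomposition, not any conceptual subtlety.
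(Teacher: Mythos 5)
Your argument is correct and complete: the block multiplication giving the semidirect-product law $(v_1,t_1)(v_2,t_2)=(v_1+e^{t_1J(\aleph)}v_2,\,t_1+t_2)$, the biholomorphism with $\C^{d+1}$ yielding simple connectedness, and the conjugation computation recovering $\ad_{e_0}=J(\aleph)$ are exactly what is needed. Note that the paper itself offers no proof to compare against --- it imports this statement verbatim as Proposition 3.3 of the cited reference --- but your route is the standard one, and the group law you derive is the same one the paper later uses without comment in Proposition \ref{HaarMeasureProp} and Proposition \ref{GenVecFields}.
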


In order to abbreviate our notations, we will use the notation $(v,t)$ for an element of a complex almost Abelian Lie algebra $\aA(\aleph) = \mathbf{V} \rtimes \C$, and we will use the notation $[v,t]$ to denote the matrix
\begin{equation*}
    \begin{pmatrix}
    1 & 0 & 0 \\
    v & e^{tJ(\aleph)} & 0 \\
    t & 0 & 1
    \end{pmatrix}
\end{equation*}
from the simply connected representation of Prop. \ref{SimpConnRep}; i.e., $[v,t]$ is an element of the simply connected group with Lie algebra $\aA(\aleph)$. Using these convenient representations, we recall Remark 3.5 from \cite{avetisyan2023topological}:

\begin{remark} \label{ExpIsId}
Let $G$ be the simply connected group that has Lie algebra $\aA(\aleph)$. It follows that on the Abelian Lie subalgebra $\ker(J(\aleph)) \oplus \C$ the exponential map $\exp: \aA(\aleph) \to G$ associated with $G$ is given by:
\begin{equation*}
    \exp((v,t)) = [v,t], \qquad \forall (v,t) \in \ker(J(\aleph)) \oplus \C
\end{equation*}
\end{remark}

We recall the following standard fact from Lie theory:

\begin{lemma} \label{ExpMapToCenter}
Let $\mathfrak{g}$ be an arbitrary Lie algebra, $G$ be a connected matrix Lie group that has Lie algebra $\mathfrak{g}$, and let $\exp_G: \mathfrak{g} \to G$ be the corresponding exponential map (specific to $G$). Then $\exp_G(Z(\mathfrak{g})) \subseteq Z(G)$.
\end{lemma}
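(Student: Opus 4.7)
The plan is to show that $\exp_G(X)$ commutes with every element of $G$ whenever $X \in Z(\mathfrak{g})$. The central identity used throughout is the adjoint conjugation formula
\begin{equation*}
    g \, \exp_G(X) \, g^{-1} = \exp_G(\mathrm{Ad}(g) X), \qquad g \in G, \ X \in \mathfrak{g},
\end{equation*}
which for matrix Lie groups follows directly from the power-series definition of the matrix exponential. Thus it suffices to show that $\mathrm{Ad}(g) X = X$ for every $g \in G$ whenever $X$ is central in $\mathfrak{g}$.

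First I would handle the case $g = \exp_G(Y)$ for $Y \in \mathfrak{g}$. Here we use the standard relation $\mathrm{Ad}(\exp_G(Y)) = e^{\mathrm{ad}(Y)}$, which again reduces to a matrix power-series computation. Since $X \in Z(\mathfrak{g})$ means $\mathrm{ad}(Y)(X) = [Y,X] = 0$ for every $Y \in \mathfrak{g}$, all higher powers of $\mathrm{ad}(Y)$ annihilate $X$ as well, and so $\mathrm{Ad}(\exp_G(Y)) X = X$. This settles the claim for $g$ in the image of the exponential map.

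Next I would invoke the standard fact that a connected Lie group $G$ is generated (as an abstract group) by any neighborhood of the identity, and in particular by $\exp_G(\mathfrak{g})$, since $\exp_G$ is a local diffeomorphism near $0$. Thus any $g \in G$ factors as a finite product $g = \exp_G(Y_1) \cdots \exp_G(Y_k)$. Because $\mathrm{Ad} \colon G \to \mathrm{Aut}(\mathfrak{g})$ is a homomorphism, we get
\begin{equation*}
    \mathrm{Ad}(g) X = \mathrm{Ad}(\exp_G(Y_1)) \cdots \mathrm{Ad}(\exp_G(Y_k)) X = X,
\end{equation*}
by repeatedly applying the result of the previous paragraph. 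Feeding this back into the conjugation formula yields $g \, \exp_G(X) \, g^{-1} = \exp_G(X)$ for every $g \in G$, which is exactly the statement that $\exp_G(X) \in Z(G)$.

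The proof is almost entirely bookkeeping over well-known identities; the only part requiring care is ensuring the connectedness hypothesis is used precisely where needed, namely in the step that $G$ is generated by $\exp_G(\mathfrak{g})$. Without connectedness this step fails, and the lemma becomes false (the conclusion would only hold on the identity component). This is the one place I would be explicit to avoid confusion in later applications.
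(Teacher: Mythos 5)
Your proof is correct. The paper states this lemma without proof, citing it as a standard fact from Lie theory, so there is no argument in the text to compare against; your write-up supplies exactly the standard argument one would expect. The two identities you rely on, $g\exp_G(X)g^{-1}=\exp_G(\Ad(g)X)$ and $\Ad(\exp_G(Y))=e^{\ad(Y)}$, are indeed immediate power-series computations for matrix groups, and you correctly isolate connectedness as the hypothesis needed to pass from elements in the image of $\exp_G$ to arbitrary $g\in G$ via the fact that a connected group is generated by any identity neighborhood. No gaps.
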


We will also need a result on the description of the center of a complex simply connected almost Abelian group. This is Proposition 4.2 from \cite{avetisyan2023topological}.

\begin{proposition} \label{CenterSimpConn}
The center of the simply connected almost Abelian Lie group $G$ with Lie algebra $\aA(\aleph)$ is given by
\begin{align*}
    Z(G) &= \exp_{G}[Z(\aA(\aleph))] \times T_\aleph = \exp_{G}[Z(\aA(\aleph)) \times T_\aleph] \\
    &=\{(u,s) \in \C^d \rtimes \C \divides u \in \ker(J(\aleph)),\ e^{s J(\aleph)} = \mathds{1} \}
\end{align*}
where $\exp_{G}: \aA(\aleph) \to G$ is the associated exponential map with $G$.

Also, the preimage under the exponential map (associated with $G$) of the identity component of the center is:
\begin{equation*}
    \exp_{G}\inv[Z(G)_0] = Z(\aA(\aleph))
\end{equation*}
\end{proposition}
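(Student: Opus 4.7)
The proof breaks into two parts: identifying the set $Z(G)$ by a direct commutator computation, and identifying $\exp_G^{-1}[Z(G)_0]$ by a naturality argument.

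\textbf{Identifying $Z(G)$.} I would start from the matrix representation of Proposition \ref{SimpConnRep} and compute
\begin{equation*}
[v,t]\cdot[w,r] \;=\; [\,v + e^{tJ(\aleph)}w,\ t + r\,].
\end{equation*}
Equating this with $[w,r]\cdot[v,t]$ gives the centralizer condition $(\mathds{1} - e^{rJ(\aleph)})v = (\mathds{1} - e^{tJ(\aleph)})w$ for all $(w,r) \in \C^d \oplus \C$. Setting $r = 0$ and varying $w$ forces $e^{tJ(\aleph)} = \mathds{1}$, while setting $w = 0$ and differentiating in $r$ at zero forces $J(\aleph)v = 0$; conversely these two conditions together imply commutativity. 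This yields the rightmost formula. From the semidirect product bracket $[(v,t),(w,r)] = (tJ(\aleph)w - rJ(\aleph)v,\,0)$ one reads off $Z(\aA(\aleph)) = \ker J(\aleph) \times \{0\}$, and $T_\aleph \subseteq \C$ is the set of $s$ satisfying $e^{sJ(\aleph)} = \mathds{1}$. The middle and right expressions then agree after invoking Remark \ref{ExpIsId} to identify $\exp_G$ with the bracket notation on $\ker J(\aleph) \oplus \C \supseteq Z(\aA(\aleph)) \oplus T_\aleph$. The leftmost and middle expressions coincide via the matrix identity $[u,0] \cdot [0,s] = [u,s]$, which identifies $T_\aleph$ with its image $\{[0,s] : s \in T_\aleph\} \subseteq G$.

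\textbf{Identifying $\exp_G^{-1}[Z(G)_0]$.} The subgroup $T_\aleph$ is discrete in $\C$ (the eigenvalues of $sJ(\aleph)$ are forced to lie in $2\pi i\Z$), so the identity component of $Z(G)$ is
\begin{equation*}
Z(G)_0 \;=\; \{[u,0] : u \in \ker J(\aleph)\},
\end{equation*}
since the $t$-coordinate must lie in the connected component $\{0\}$ of $T_\aleph$. Lemma \ref{ExpMapToCenter} combined with Remark \ref{ExpIsId} yields $\exp_G(Z(\aA(\aleph))) = Z(G)_0$, which gives the inclusion $Z(\aA(\aleph)) \subseteq \exp_G^{-1}[Z(G)_0]$. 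For the reverse inclusion I would use the Lie group homomorphism $\pi\colon G \to \C$, $[v,t] \mapsto t$, whose differential is the projection $(v,t) \mapsto t$; the naturality identity $\pi \circ \exp_G = \exp_\C \circ d\pi$ (with $\exp_\C = \id$) forces $t = 0$ whenever $\exp_G((v,t)) \in Z(G)_0$. Then Remark \ref{ExpIsId} gives $\exp_G((v,0)) = [v,0]$, which lies in $Z(G)_0$ iff $v \in \ker J(\aleph)$, i.e.\ iff $(v,0) \in Z(\aA(\aleph))$.

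\textbf{Main obstacle.} The individual computations are elementary matrix algebra; the genuine subtlety is bookkeeping around $T_\aleph$, which plays a dual role as a subset of $\C \subseteq \aA(\aleph)$ on the input side of $\exp_G$ and as a subgroup of $G$ on the output side. Disentangling the two ``product'' symbols in the chain $\exp_G[Z(\aA(\aleph))] \times T_\aleph = \exp_G[Z(\aA(\aleph)) \times T_\aleph]$ and verifying that they refer to compatible operations is the main hurdle, but this is resolved immediately once Remark \ref{ExpIsId} is in play.
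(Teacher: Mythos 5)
A point of order first: this paper never actually proves Proposition \ref{CenterSimpConn} --- it is recalled verbatim as Proposition 4.2 of \cite{avetisyan2023topological}, so there is no in-paper proof to compare yours against. Judged on its own terms, your argument is essentially correct and self-contained: the commutator computation $[v,t][w,r]=[v+e^{tJ(\aleph)}w,\,t+r]$ agrees with the group law used in Proposition \ref{HaarMeasureProp}, the centralizer condition $(\mathds{1}-e^{rJ(\aleph)})v=(\mathds{1}-e^{tJ(\aleph)})w$ is analyzed correctly in both directions, your disentangling of the two roles of $T_\aleph$ (subgroup $\{[0,s]:e^{sJ(\aleph)}=\mathds{1}\}$ of $G$ versus subset of $\C\subseteq\aA(\aleph)$) via Remark \ref{ExpIsId} and the identity $[u,0][0,s]=[u,s]$ is exactly the right bookkeeping, and the naturality identity $\pi\circ\exp_G=\exp_\C\circ\,d\pi$ for the projection $\pi([v,t])=t$ is a clean way to force $t=0$ in the reverse inclusion for $\exp_G\inv[Z(G)_0]$.

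Two of your justifications, however, do not cover all cases as written, though each is repairable in a line. First, your discreteness argument for $T_\aleph$ (``the eigenvalues of $sJ(\aleph)$ are forced to lie in $2\pi i\Z$'') is vacuous precisely when $J(\aleph)$ is nilpotent, since then every eigenvalue of $sJ(\aleph)$ is $0\in 2\pi i\Z$ for all $s$; in that case you must instead argue that $e^{sJ(\aleph)}=\mathds{1}$ with $sJ(\aleph)$ nilpotent forces $sJ(\aleph)=0$ (the exponential is injective on nilpotent matrices, via the polynomial logarithm), whence $s=0$ because $J(\aleph)\neq 0$ by the almost Abelian hypothesis, so $T_\aleph=\{0\}$. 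Second, in the reverse inclusion you invoke Remark \ref{ExpIsId} at $(v,0)$ for \emph{arbitrary} $v\in\C^d$, but the remark is stated only on $\ker(J(\aleph))\oplus\C$ --- and $v\in\ker(J(\aleph))$ is exactly what you are in the middle of proving, so the citation is circular as it stands. The fix is a direct computation: in the matrix picture, $(v,0)$ corresponds to the matrix $X$ with the block $v$ in the lower-left and zeros elsewhere, which satisfies $X^2=0$, so $\exp_G((v,0))=\mathds{1}+X=[v,0]$ for every $v\in\C^d$; equivalently, $\exp_G$ restricted to the Abelian ideal $\mathbf{V}$ is $v\mapsto[v,0]$. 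With these two patches your proof is complete and would serve as a legitimate substitute for the citation.
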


%----------------------------------------------------------------------------------------------------------------------------------------------------------------
%----------------------------------------------------------------------------------------------------------------------------------------------------------------

\section{Invariant Measures and Frames}
We now find the left and right Haar measures $\mu_L$ and $\mu_R$, and the modular function $\Delta$, on a simply connected almost Abelian group $G$.
\begin{proposition} \label{HaarMeasureProp}
Denote the left Haar measure by $\mu_L$, the right Haar measure by $\mu_R$, and the modular function associated to the Haar measures by $\Delta$. Then:
\begin{equation*}
    d\mu_L(v,t) = e^{-2 \tr[\Re(tJ(\aleph))]} dv\,dt, \quad d\mu_R(v,t) = dv \, dt, \quad \Delta(v,t) = e^{-2 \tr[\Re(tJ(\aleph))]}.
\end{equation*}

\begin{proof}
In order to check the left and right Haar measures, we simply need to check that they are indeed invariant under left and right multiplication, respectively. So, fix an arbitrary $[u,s] \in G$. First consider left multiplication:
\begin{equation} \label{leftmult}
    \Phi_{[u,s]}([v,t]) := [u,s][v,t] = [u + e^{sJ(\aleph)} v, t + s] =: [v', t'].
\end{equation}

The Jacobian matrix of the coordinate transformation $(v,t) \mapsto (v',t')$, denoted here by $\mathcal{J}(v',t';v,t)$, can be seen from \eqref{leftmult} to be:
\begin{equation*}
    \mathcal{J}_{\C}(v',t';v,t) \coloneqq \begin{pmatrix}
    \pdv{v'}{v} & \pdv{v'}{t} \\
    \pdv{t'}{v} & \pdv{t'}{t}
    \end{pmatrix}
    = \begin{pmatrix}
    e^{sJ(\aleph)} & 0 \\
    0 & 1
    \end{pmatrix},
\end{equation*}
when $v', t', v, t$ are considered as complex vectors. But we are interested in transforming the Lebesgue measure. Let $\mathcal{J}_{\R}$ represent the corresponding real Jacobian. We use the known (see \cite{range1998holomorphic}, pg. 19) equality $\det(\mathcal{J}_{\R}) = \det(\mathcal{J}_{\C} \mathcal{J}_{\C}^*)$. Thus
\begin{equation*}
    \det\big(\mathcal{J}_{\R}\big) = \det\big( (e^{sJ(\aleph)}) (e^{sJ(\aleph)})^* \big) = e^{2 \tr[\Re (sJ(\aleph))]}.
\end{equation*}

Then we calculate for our candidate of the left Haar measure:
\begin{align*}
    d\mu_L\big(\Phi_{[u,s]}([v,t])\big) &= d\mu_L([v',t']) \\
    &= e^{- 2 \tr[\Re(t'J(\aleph))]} dv' \, dt' \\
    &= e^{-2 \tr[\Re(t'J(\aleph))]} \abs{\mathcal{J}_{\R}(v',t';v,t)} dv \, dt \\
    &= e^{- 2 \tr[\Re((t+s)J(\aleph))]} e^{2 \tr[\Re(sJ(\aleph))]} dv\, dt \\
    &= e^{-2 \tr[\Re(tJ(\aleph))]} dv\, dt \\
    &= d\mu_L([v,t]).
\end{align*}

Now consider right multiplication for a fixed arbitrary $[u,s] \in G$:
\begin{equation*}
    \Psi_{[u,s]}([v,t]) := [v,t][u,s] = [v + e^{tJ(\aleph)}u, t + s] =: [v'', t''].
\end{equation*}
Then
\begin{align*}
    d\mu_R\big( \Psi_{[u,s]}([v,t]) \big) &= d\mu_R([v'', t'']) \\
    &= dv''\, dt'' \\
    &= \abs{\mathcal{J}_{\R}(v'', t'', v,t)} dv\, dt \\
    &=  dv\, dt \\
    &= d\mu_R([v,t]),
\end{align*}
where the complex Jacobian matrix $\mathcal{J}_{\C}(v'', t''; v,t)$ is calculated to be:
\begin{equation*}
    \mathcal{J}_{\C}(v'', t''; v,t) = \begin{pmatrix}
    1 & J(\aleph) e^{tJ(\aleph)}u \\
    0 & 1
    \end{pmatrix},
\end{equation*}
from which it is clear the real Jacobian is also 1.

Lastly, we calculate that the modular function is
\begin{equation*} \label{ModFuncEq}
    \Delta(v,t) = \frac{d\mu_L(v,t)}{d \mu_R(v,t)} = e^{-2 \tr[\Re\big(tJ(\aleph)\big)]}.
\end{equation*}
\end{proof}
\end{proposition}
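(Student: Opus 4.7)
The plan is a direct verification: since the proposition supplies explicit candidate formulas, we need only check that $\mu_L$ is preserved by every left translation and $\mu_R$ by every right translation, after which $\Delta$ is read off as the quotient. The only computational input is the group law, which I would extract from the block-matrix product in Proposition \ref{SimpConnRep}, giving $[u,s][v,t] = [u + e^{sJ(\aleph)}v,\, s+t]$.

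First I fix an arbitrary $[u,s]\in G$ and view left translation as a holomorphic diffeomorphism of $\C^d\times\C$. Its complex Jacobian is visibly block upper-triangular with diagonal blocks $e^{sJ(\aleph)}$ and $1$, so the complex Jacobian determinant is $\det e^{sJ(\aleph)} = e^{\tr(sJ(\aleph))}$. To change variables against the real Lebesgue measure $dv\,dt$, I would invoke the standard identity $\det\mathcal{J}_{\R} = \det(\mathcal{J}_{\C}\mathcal{J}_{\C}^{*}) = |\det\mathcal{J}_{\C}|^{2}$ for holomorphic maps of matching complex dimension, which converts the scaling factor to $e^{2\Re\,\tr(sJ(\aleph))} = e^{2\tr[\Re(sJ(\aleph))]}$ by commutativity of trace with conjugation. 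Plugging this into the pullback of $e^{-2\tr[\Re(tJ(\aleph))]}dv\,dt$ and noting that $t$ shifts additively by $s$, the exponent $-2\tr[\Re((s+t)J(\aleph))]$ combines with $+2\tr[\Re(sJ(\aleph))]$ to reproduce the original weight; this yields left-invariance.

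For $\mu_R$, I would repeat the calculation for right translation $(v,t)\mapsto(v+e^{tJ(\aleph)}u,\, t+s)$. The complex Jacobian is again block upper-triangular, but now with the identity matrix and $1$ on the diagonal, so its determinant is $1$; hence the real Jacobian determinant is also $1$, and the flat Lebesgue measure $dv\,dt$ is already right-invariant with no exponential weight. The off-diagonal block involving $J(\aleph)e^{tJ(\aleph)}u$ plays no role since the determinant of an upper-triangular matrix depends only on its diagonal. Finally, $\Delta$ is obtained tautologically as $d\mu_L/d\mu_R$.

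I do not anticipate a genuine obstacle: the whole argument is a change-of-variables check. The one point deserving care is the real-vs-complex Jacobian identity, which is why I would cite the standard reference for $\det\mathcal{J}_{\R}=|\det\mathcal{J}_{\C}|^{2}$ on holomorphic coordinate changes; once granted, everything else is bookkeeping in the group law.
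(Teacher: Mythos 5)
Your proposal is correct and follows essentially the same route as the paper: a direct change-of-variables verification using the block-triangular complex Jacobians of left and right translation, the identity $\det(\mathcal{J}_{\R}) = \det(\mathcal{J}_{\C}\mathcal{J}_{\C}^{*}) = \abs{\det \mathcal{J}_{\C}}^2$, and the additive cancellation in the exponent, with $\Delta$ read off as the ratio of the two measures. No substantive differences from the paper's argument.
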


Let $G$ be a simply connected Lie group with Lie algebra $\aA(\aleph)$. Recall that the value of a vector field $X \in C^\infty(TG)$ evaluated at a point $[v,t] \in G$ can be written in terms of the coordinate basis vectors:
$$ X_{[v,t]} = X^i([v,t]) \pa{}{x^i} \bigg |_{[v,t]}. $$

Working with the global chart $(v,t) \in \C^d \rtimes \C = G$ as in Prop. \ref{SimpConnRep}, a vector $X \in T_{[v,t]}G$ in these coordinates will take the form of a $d+1$ dimensional row such that 
$$(u, s) \mapsto  \begin{pmatrix}
u^\top & s
\end{pmatrix} = \begin{pmatrix}
X^1([v,t]) & X^2([v,t]) & \dots & X^{d+1}([v,t])
\end{pmatrix}.$$
We sometimes will also identify $X$ directly with the $d+1$ dimensional row $\begin{pmatrix}
u^\top & s
\end{pmatrix}$.

\begin{proposition} \label{GenVecFields}
For every tangent vector $X \in T_{[0,0]}G$ at the identity, %maybe represented as a row...? 
the corresponding left and right generator vector fields are given by 
$$\vL_X (v,t) = X \begin{pmatrix}
\mathds{1} & 0 \\ v ^\top J(\aleph)^\top  & 1
\end{pmatrix}, \qquad \vR_X (v, t) = X \begin{pmatrix}
e^{tJ(\aleph)^\top} & 0 \\
0 & 1 
\end{pmatrix}, \qquad (v,t) \in G.$$
\begin{proof}
Let $ (-1, 1) \ni \tau \mapsto (u(\tau), s(\tau)) \in G$ be a smooth curve with $(u(0), s(0)) = (0,0)$ such that $(u'(0), s'(0)) = X$. Consider the smooth 1-parameter family of biholomorphisms
$$[v,t] \mapsto [u(\tau), s(\tau)] [v,t] = [u(\tau) + e^{s(\tau)J(\aleph)}v, s(\tau) + t] = [v(\tau), t(\tau)], \quad \forall (v,t) \in G, \quad \tau \in (-1, 1).$$
Then, 
$$  \vL_X(v,t) = \frac{d}{d\tau}[v(\tau), t(\tau)] \bigg |_{\tau = 0} = \frac{d}{d\tau} [u(\tau) + e^{s(\tau)J(\aleph)}v, s(\tau) + t] \bigg |_{\tau = 0} = (u'(0) + s'(0)J(\aleph)v, s'(0)) $$ $$  = \begin{pmatrix}
    {(u'(0) + s'(0)J( \aleph)v)}^\top & s'(0) \\
    \end{pmatrix} = \begin{pmatrix}
    {u'(0)}^\top & {s'(0)}
    \end{pmatrix} 
    \begin{pmatrix}
    \mathds{1} & 0  \\ v^\top J(\aleph)^\top & 1
    \end{pmatrix} =  X \begin{pmatrix} \mathds{1} & 0 \\ v ^\top J(\aleph)^\top  & 1
\end{pmatrix},
$$
as desired. Next consider the smooth 1-parameter family of biholomorphisms given by 
$$[v,t] \mapsto [v,t] [u(\tau), s(\tau)] = [v + e^{tJ(\aleph)}u(\tau), t + s(\tau)] = [v(\tau), t(\tau)], \quad \forall (v,t) \in G, \quad \tau \in (-1, 1).$$
Then, 
$$  \vR_X(v,t) = \frac{d}{d\tau}[v(\tau), t(\tau)] \bigg |_{\tau = 0} = \frac{d}{d\tau} [v + e^{tJ(\aleph)}u(\tau), t + s(\tau)] \bigg |_{\tau = 0} = (e^{tJ(\aleph)}u'(0), s'(0)) $$ 
$$ = \begin{pmatrix}
    {(e^{tJ(\aleph)}u'(0))}^\top & s'(0) \\
    \end{pmatrix} = \begin{pmatrix}
    {u'(0)}^\top & {s'(0)}
    \end{pmatrix} 
    \begin{pmatrix}
    {e^{tJ(\aleph)}}^\top & 0  \\ 0 & 1
    \end{pmatrix} =  X \begin{pmatrix}
    {e^{tJ(\aleph)}}^\top & 0  \\ 0 & 1
    \end{pmatrix}.
$$
Again, as desired. 
\end{proof}
\end{proposition}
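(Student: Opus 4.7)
The plan is to compute $\vL_X$ and $\vR_X$ directly from their definitions as tangent vectors to one-parameter families of translations in $G$. Since $G$ is realized explicitly as a matrix group in Proposition~\ref{SimpConnRep}, block matrix multiplication immediately gives the product rule
$$[u,s]\cdot[v,t]=[u+e^{sJ(\aleph)}v,\,s+t],$$
so the whole proof should reduce to a single application of the chain rule in each case.

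For the left generator, I would pick any smooth curve $\tau\mapsto[u(\tau),s(\tau)]$ in $G$ with $[u(0),s(0)]=[0,0]$ and derivative $X=(a,b)$ at $\tau=0$, where $a=u'(0)\in\C^d$ and $b=s'(0)\in\C$. Then $\vL_X(v,t)$ is the $\tau$-derivative at $0$ of $[u(\tau),s(\tau)]\cdot[v,t]$. Applying the product rule above and using $s(0)=0$ so that $e^{s(0)J(\aleph)}=\mathds{1}$, the $\C^d$-component of the derivative yields $a+b\,J(\aleph)v$ and the $\C$-component yields $b$. Rewriting this tangent vector as a row $(a^\top+b\,v^\top J(\aleph)^\top,\,b)$ and factoring $X$ on the left gives the claimed formula for $\vL_X$.

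The right case is dual: now I would differentiate $[v,t]\cdot[u(\tau),s(\tau)]$ at $\tau=0$. Here $(v,t)$ is held fixed, so the factor $e^{tJ(\aleph)}$ passes through the derivative unchanged and the $\C^d$-component becomes $e^{tJ(\aleph)}a$, while the $\C$-component is again $b$. Writing this as a row and factoring $X$ on the left yields the block-diagonal matrix advertised for $\vR_X$.

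I do not anticipate any substantive obstacle. The calculation is routine; the only care needed is bookkeeping about the row/column convention for tangent vectors established just before the proposition, and about which side of the product is being varied. Well-definedness of the answer (independence of the curve) is automatic from the chain rule, which also shows the assignment $X\mapsto\vL_X$ is linear, a property that is used implicitly when building an invariant frame out of a basis of $T_{[0,0]}G$ in the sequel.
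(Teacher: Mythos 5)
Your proposal is correct and follows essentially the same route as the paper: differentiate the one-parameter family of left (resp.\ right) translations along a curve through the identity with velocity $X$, use the block multiplication rule $[u,s][v,t]=[u+e^{sJ(\aleph)}v,\,s+t]$, and factor the resulting row vector as $X$ times the stated matrix. The paper's proof is exactly this computation, so there is nothing substantive to add.
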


%----------------------------------------------------------------------------------------------------------------------------------------------------------------

\begin{corollary} \label{frameOfHoloVF}
    A global frame of left- and right- invariant vector fields on the simply connected almost Abelian Lie group $G$ as above can be formed by columns of the matrices
    \begin{equation*}
        \begin{pmatrix}
        e^{tJ(\aleph)}&0\\
        0&1
        \end{pmatrix}
        \qquad\text{ and }\qquad
        \begin{pmatrix}
        \mathds{1} & J(\aleph)v\\
        0&1
 \end{pmatrix}
 \end{equation*}
respectively. A global frame of left- and right- invariant 1-forms on the simply connected almost Abelian Lie group $G$ as above can be formed by rows of the matrices
    \begin{equation*}
 \begin{pmatrix}
 e^{-tJ(\aleph)}&0\\
 0&1
 \end{pmatrix}
 \qquad\text{ and }\qquad
 \begin{pmatrix}
 \mathds{1} & -J(\aleph)v\\
 0&1
 \end{pmatrix}
 \end{equation*}
respectively.
\end{corollary}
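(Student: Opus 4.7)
The plan is to deduce this corollary directly from Proposition \ref{GenVecFields} by two routine bookkeeping steps: specializing the generator formulas to the standard basis of $T_{[0,0]}G$ to obtain column-vector frame matrices, and inverting those matrices to produce the dual coframes of invariant 1-forms.

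First I would recall the standard identification of generator vector fields with invariant vector fields. Because $\vL_X$ has flow $[v,t]\mapsto \exp(\tau X)\cdot[v,t]$, pushing it forward by any right translation $R_g$ leaves it unchanged — the flow starting at $[v,t]g$ is $\exp(\tau X)\cdot[v,t]\cdot g$, which matches the image of the flow through $[v,t]$. Hence $\vL_X$ is \emph{right}-invariant, and by the symmetric argument $\vR_X$ is \emph{left}-invariant. Evaluating the formulas in Prop \ref{GenVecFields} at the identity yields $\vR_X([0,0])=X=\vL_X([0,0])$, so letting $X$ range over the standard basis $\{e_i\}_{i=1}^{d+1}$ of $T_{[0,0]}G\cong\C^{d+1}$ produces global frames $\{\vR_{e_i}\}$ and $\{\vL_{e_i}\}$ of left- and right-invariant vector fields, respectively (invariant vector fields being determined by their value at one point).

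Next I would substitute $X=e_i$ into Prop \ref{GenVecFields}. The row vector $\vR_{e_i}(v,t)=e_i\begin{pmatrix} e^{tJ(\aleph)^\top} & 0 \\ 0 & 1 \end{pmatrix}$ is precisely the $i$-th row of that matrix; transposing it to match the column convention of the corollary, it becomes the $i$-th column of $\begin{pmatrix} e^{tJ(\aleph)} & 0 \\ 0 & 1 \end{pmatrix}$. The analogous manipulation for $\vL_{e_i}(v,t)=e_i\begin{pmatrix} \mathds{1} & 0 \\ v^\top J(\aleph)^\top & 1 \end{pmatrix}$ yields the columns of $\begin{pmatrix} \mathds{1} & J(\aleph)v \\ 0 & 1 \end{pmatrix}$, using that transposing a row whose last $d$ entries are $v^\top J(\aleph)^\top$ produces a column whose first $d$ entries are $J(\aleph)v$.

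Finally, for the dual coframes I would invoke the general principle that the 1-forms dual to a left- (resp.\ right-)invariant frame of vector fields are themselves left- (resp.\ right-)invariant, and that if the frame is arranged as the columns of a matrix $M(v,t)$, then the dual 1-forms appear as the rows of $M(v,t)\inv$ (from $M\inv M=\mathds{1}$). A direct block-triangular inversion gives $\begin{pmatrix} e^{tJ(\aleph)} & 0 \\ 0 & 1 \end{pmatrix}\inv=\begin{pmatrix} e^{-tJ(\aleph)} & 0 \\ 0 & 1 \end{pmatrix}$ and $\begin{pmatrix} \mathds{1} & J(\aleph)v \\ 0 & 1 \end{pmatrix}\inv=\begin{pmatrix} \mathds{1} & -J(\aleph)v \\ 0 & 1 \end{pmatrix}$, matching the statement. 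The only real point of care is ensuring that the generator of left multiplication ($\vL_X$) yields right-invariant objects and the generator of right multiplication ($\vR_X$) yields left-invariant ones; once that sign/side convention is fixed, the rest of the proof is transposition and a $2\times 2$ block matrix inversion.
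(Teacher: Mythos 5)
Your proposal is correct and follows essentially the same route as the paper's own proof: specialize Proposition \ref{GenVecFields} to a standard basis of $T_{[0,0]}G$, use that right generator fields are exactly the left-invariant vector fields (and vice versa), and obtain the coframes as the rows of the inverse matrices. You simply spell out details the paper leaves implicit, such as the flow argument for the invariance of the generator fields and the explicit block-triangular inversions.
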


\begin{proof}
The first statement follows from the expression of $\mathcal{L}_X$ and $\mathcal{R}_X$ in the previous proposition. Since the right generator vector fields are precisely left-invariant vector fields, finding a global frame of left-invariant vector fields is equivalent to finding a basis of the right generator vector fields. Hence by letting $X$ run over a standard basis of $T_{[0,0]}G$, which we identify as a standard basis of $\C^{d+1}$, we get the first statement.
 
For the second statement, define the dual frame $\{X^i\}_{i=1}^{d+1}$ of $\{X_i\}_{i=1}^{d+1}$ by $X^i(X_j)=\delta_j^i$. Then clearly, if $X_i$ is the columns of a matrix, then $X^i$ are the rows of the inverse matrix. This concludes the proof.
\end{proof}

%----------------------------------------------------------------------------------------------------------------------------------------------------------------

\begin{remark} \label{frameOfAntiVF}
Let $\{X^j\}_{j=1}^{d+1}$ be the above left-invariant frame of holomorphic covector fields, where $X^j = X_a^j dz^a$ and each $X_a^j$ is holomorphic for each $a$ and $j$. Define $\{\overline{X}^j\}_{j=1}^{d+1}$, where $\overline{X}^j = \overline{X}_a^j d\overline{z}^a$.  Then $\{\overline{X}^j\}_{j=1}^{d+1}$ is a frame of antiholomorphic covector fields.
\begin{proof}
Let $\overline{W} := \overline{W_j X^j}$ be an antiholomorphic covector field. Let $\{X_j\}_{j=1}^{d+1}$ be the left invariant holomorphic vector field frame we found in Corollary \ref{frameOfHoloVF}, and let $\{X^j\}_{j=1}^{d+1}$ be the corresponding holomorphic covector field frame. For each $1 \leq j \leq d+1$ let $X^j = X_a^j dz^a$, and recall that $X^j(X_k) = \delta_k^j$. Let $p \in G$ be an arbitrary element of our simply connnected almost Abelian Lie group.
Now we calculate as follows:
\begin{align*}
    \left(\overline{W}(\overline{X_k}) \right)_p &= \overline{\overline{ \overline{W_p} \left(\overline{X_k} \right)_p }} 
    = \overline{ W_p \left( X_k \right)_p} 
    = \overline{ \left( W_j(p) X_p^j \right) \big( (X_k)_p \big)} 
    = \overline{W_k(p)} \overline{\left( X^j X_k \right)_p} 
    = \overline{W_k(p)} \\
    &= \left( \left(\overline{W_j} \, \overline{X^j}\right) \big( \overline{X_k} \big) \right)_p.
\end{align*}
\end{proof}
\end{remark}

%----------------------------------------------------------------------------------------------------------------------------------------------------------------

We display a formula for left-invariant tensor fields, but it applies to right-invariant tensor fields similarly.
\begin{corollary}\label{TFinframe}
Let $T$ be a smooth section of $TG^{(m,n)} \otimes \overline{TG^{(p,q)}}$. Then $T$ can be expanded in the left invariant tensor frame derived above as follows:
\begin{equation} \label{TensorExpansion}
 T([v,t]) = 
 T^{i_1,\dots,i_m,k_1,\dots,k_p}_{j_1,\dots,j_n,\ell_1,\dots,\ell_q} X_{i_1} \cdots X_{i_m} \cj X_{k_1}\cdots \cj X_{k_p} X^{j_1} \cdots X^{j_n} \cj X^{\ell_1}\cdots \cj X^{\ell_q}
\end{equation}
for all $[v,t] \in G$, where $\left\{ X_i \right\}_{i=1}^{d+1}$ and $\left\{ X^i \right\}_{i=1}^{d+1}$ are the mutually dual frames of left-invariant holomorphic vector fields and covector fields, respectively, and $\left\{ \cj X_j \right\}_{j=1}^{d+1}$ and $\left\{ \cj X_j\right\}_{j=1}^{d+1}$ are their antiholomorphic counterparts.
\end{corollary}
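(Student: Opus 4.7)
The statement is essentially the observation that, once one has a global frame for each of the constituent bundles, any smooth section of the resulting tensor bundle expands uniquely in the tensor product frame with smooth coefficient functions. The plan is therefore to assemble such a global frame from the building blocks already constructed, and then invoke this standard fact.

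First, I would recall that Corollary \ref{frameOfHoloVF} produces a global frame $\{X_i\}_{i=1}^{d+1}$ of left-invariant holomorphic vector fields on $G$, together with its dual frame $\{X^i\}_{i=1}^{d+1}$ of left-invariant holomorphic 1-forms. Because these matrices are invertible everywhere on $G$ (the matrix $e^{tJ(\aleph)}$ is invertible for all $t$, so its inverse $e^{-tJ(\aleph)}$ makes sense), these frames are genuinely global; in particular the $X_i$'s span $T_{[v,t]}G$ at every point $[v,t]\in G$, and similarly for the cotangent fibres. By Remark \ref{frameOfAntiVF}, applying complex conjugation yields global frames $\{\cj X_i\}$ and $\{\cj X^i\}$ for the antiholomorphic tangent and cotangent bundles.

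Next I would form the tensor product frame. The bundle $TG^{(m,n)}\otimes\cj{TG^{(p,q)}}$ has fibre at each point equal to the tensor product of the corresponding fibres, so pointwise tensor products of $m$ copies of the $X_i$'s, $n$ copies of the $X^j$'s, $p$ copies of the $\cj X_k$'s, and $q$ copies of the $\cj X^\ell$'s yield a global frame for this bundle. Then I would apply the standard fact that any smooth section $T$ of a smooth vector bundle expands uniquely as a linear combination of a global frame with smooth scalar coefficient functions; applied here, this produces precisely the expansion \eqref{TensorExpansion} with the coefficients $T^{i_1,\dots,i_m,k_1,\dots,k_p}_{j_1,\dots,j_n,\ell_1,\dots,\ell_q}\in C^\infty(G)$ obtained by pairing $T$ against the dual tensor frame.

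There is no real obstacle: once the holomorphic frame of Corollary \ref{frameOfHoloVF} is in hand and the antiholomorphic frame of Remark \ref{frameOfAntiVF} is invoked, the result is immediate from the universal property of tensor products together with the basic frame-expansion statement. The substantive content of the corollary is therefore just the observation that the left-invariant frames of $G$ are simultaneously global frames for the holomorphic and antiholomorphic bundles, which is what makes the expansion meaningful on all of $G$ rather than only locally. Note that at this stage the coefficients are merely smooth functions on $G$; the strengthening to constant coefficients for \emph{invariant} tensor fields is the content of the later Proposition \ref{ConstantCoeffTensorFields} and not part of what must be shown here.
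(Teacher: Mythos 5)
Your proposal is correct and matches the paper's intent: the paper states this corollary without proof, treating it as an immediate consequence of the global frames from Corollary \ref{frameOfHoloVF} and Remark \ref{frameOfAntiVF}, and your argument simply spells out the standard frame-expansion fact that the paper leaves implicit. Nothing is missing.
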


%----------------------------------------------------------------------------------------------------------

\begin{proposition}\label{ConstantCoeffTensorFields}
The mixed tensor field $T$ as in \eqref{TensorExpansion} is left-invariant if and only if the coefficient functions $$T^{i_1,\dots,i_m,k_1,\dots,k_p}_{j_1,\dots,j_n,\ell_1,\dots,\ell_q}$$ are all constant.

\begin{proof}
For every $g \in G$, define $\Phi_g : G \to G$ by $\Phi_g(x) = gx$. Abusing notation, we let $\Phi_{g}^*$ denote both the pullback and pushforward of $\Phi_g$. 

Since the frames $\{X_i\}_{i=1}^{d+1}$, $\{X^i\}_{i=1}^{d+1}$, $\{\cj X_j\}_{j=1}^{d+1}$, and $\{\cj X^j\}_{j=1}^{d+1}$ are all left-invariant, we know that $\Phi_{g}^* X_j = X_j$, $\Phi_g^* X^j = X^j$, $\Phi_g^* \cj X_j = \cj X_j$, and $\Phi_{g}^* \cj X^j = \cj X^j$ for each $1 \leq j \leq d+1$ and for all $g \in G$. Then we may say that $T$ is left-invariant if and only if
\begin{align*}
     \Phi_{g}^* T &= \Phi_g^*(T^{i_1,\dots,i_m,k_1,\dots,k_p}_{j_1,\dots,j_n,\ell_1,\dots,\ell_q}) \Phi_{g}^* ( X_{i_1} \cdots X_{i_m} \cj X_{k_1}\cdots \cj X_{k_p} X^{j_1} \cdots X^{j_n}  \cj X^{\ell_1}\cdots \cj X^{\ell_q}) \\
     &= \Phi_g^*(T^{i_1,\dots,i_m,k_1,\dots,k_p}_{j_1,\dots,j_n,\ell_1,\dots,\ell_q}) X_{i_1} \cdots X_{i_m} \cj X_{k_1}\cdots \cj X_{k_p} X^{j_1} \cdots X^{j_n}  \cj X^{\ell_1}\cdots \cj X^{\ell_q} \\
     &=  T^{i_1,\dots,i_m,k_1,\dots,k_p}_{j_1,\dots,j_n,\ell_1,\dots,\ell_q} X_{i_1} \cdots X_{i_m} \cj X_{k_1}\cdots \cj X_{k_p} X^{j_1} \cdots X^{j_n}  \cj X^{\ell_1}\cdots \cj X^{\ell_q} = T,
\end{align*}
i.e., iff the scalar functions $$T^{i_1,\dots,i_m,k_1,\dots,k_p}_{j_1,\dots,j_n,\ell_1,\dots,\ell_q}$$ are invariant under the left action of $G$. Since this action is transitive, invariance is equivalent to being constant.
\end{proof}
\end{proposition}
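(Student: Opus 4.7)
The plan is to exploit two ingredients that have already been put in place: the left-invariance of every frame element $X_i, X^i, \overline{X}_j, \overline{X}^j$, and the transitivity of the action of $G$ on itself by left translation. Write $\Phi_g : G \to G$, $\Phi_g(x) = gx$, for each $g \in G$; then $T$ is left-invariant precisely when $\Phi_g^\ast T = T$ for all $g$, with the usual convention that $\Phi_g^\ast$ acts on mixed tensor fields by pullback on covariant slots and by $(\Phi_g^{-1})_\ast$ on contravariant slots.

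The first key step is to apply $\Phi_g^\ast$ to the expansion \eqref{TensorExpansion}. Because $\Phi_g^\ast$ is a tensor algebra homomorphism (it distributes over tensor products, commutes with contractions, and acts on smooth scalar functions by $f \mapsto f \circ \Phi_g$), and because Corollary \ref{frameOfHoloVF} together with Remark \ref{frameOfAntiVF} tell us that $\Phi_g^\ast$ fixes each of the frame elements $X_{i}$, $X^{j}$, $\overline{X}_{k}$, $\overline{X}^{\ell}$, the pullback passes through the frame factors and lands entirely on the coefficient functions. Thus
\[
\Phi_g^\ast T = \bigl(T^{i_1,\dots,i_m,k_1,\dots,k_p}_{j_1,\dots,j_n,\ell_1,\dots,\ell_q} \circ \Phi_g\bigr)\, X_{i_1} \cdots X_{i_m} \overline{X}_{k_1} \cdots \overline{X}_{k_p} X^{j_1} \cdots X^{j_n} \overline{X}^{\ell_1} \cdots \overline{X}^{\ell_q}.
\]

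The second key step is uniqueness of expansion: since the tensor products of the frame elements constitute a global frame for the bundle $TG^{(m,n)} \otimes \overline{TG^{(p,q)}}$, the coefficients in \eqref{TensorExpansion} are uniquely determined by $T$. Hence $\Phi_g^\ast T = T$ for every $g \in G$ if and only if every coefficient function is invariant under precomposition with every $\Phi_g$. Transitivity of the left action (given any $p, q \in G$ there exists $g = qp^{-1}$ with $\Phi_g(p) = q$) then upgrades this pointwise invariance to the statement that each coefficient is globally constant, giving both directions of the equivalence.

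There is no real obstacle here: the substantive content was already absorbed into the earlier construction of a left-invariant global frame, after which the proposition reduces to bookkeeping. The only thing to be attentive to is the convention for $\Phi_g^\ast$ on contravariant slots, so that the identities $\Phi_g^\ast X_i = X_i$ and $\Phi_g^\ast X^j = X^j$ both hold without sign or inverse ambiguity; this is automatic for a diffeomorphism with $\Phi_{g_1 g_2} = \Phi_{g_1} \circ \Phi_{g_2}$.
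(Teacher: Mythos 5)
Your argument is correct and follows essentially the same route as the paper's proof: apply $\Phi_g^*$ to the expansion, use the left-invariance of the frame to transfer the action onto the coefficient functions, and invoke transitivity of left translation to conclude constancy. Your explicit appeal to uniqueness of the coefficients in the global frame is a small but worthwhile clarification that the paper leaves implicit.
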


%----------------------------------------------------------------------------------------------------------------------------------------------------------------
A hermitian form $h$ is a non-degenerate sesquilinear form with hermitian symmetry, written locally that is  
$$h = h_{ab}dz^a \otimes d\cj{z}^b,$$
where $h_{ab}$ is a hermitian matrix. Furthermore a hermitian metric (or structure) is a positive-definite hermitian form. In light of Corollary \ref{TFinframe} we may write any hermitian metric $h$ on a simply connected group $G$ as
$$h = h_{ij}X^i \cj{X}^j,$$
where $h_{ij}$ is a positive definite matrix. We are now prepared to classify all (left) invariant hermitian metrics on a simply connected complex almost Abelian groups. 
\begin{corollary}\label{InvariantHermitianMetricsHaveConstantCoeffs}
Let $h = {h}_{jk} X^j\cj{X}^k$ be a Hermitian metric. Then $h$ is left-invariant if and only if $\widehat{h}_{jk}$ is a constant matrix.
\begin{proof}
For every $g \in G$, define $\Phi_g : G \to G$ by $\Phi_g(x) = gx$.

Since the frames $\{X^i\}_{i=1}^{d+1}$, and $\{\cj X^j\}_{j=1}^{d+1}$ are left-invariant, we know that $\Phi_g^* X^j = X^j$, and $\Phi_g^* \cj X^j = \cj X^j$ for each $1 \leq j \leq d+1$ and for all $g \in G$. Then we may say that $h$ is left-invariant if and only if
\begin{align*}
     \Phi_{g}^* h &= \Phi_{g}^* (\widehat{h}_{jk})\Phi_{g}^* (X^j\cj X^k) \\
     &= \Phi_{g}^* (\widehat{h}_{jk}) X^j \cj X^k \\
     &= \widehat{h}_{jk} X^j \cj X^k = h,
\end{align*}
i.e., iff the scalar functions $\widehat{h}_{jk}$ are invariant under the left action of $G$. Since this action is transitive, invariance is equivalent to being constant.
\end{proof}
\end{corollary}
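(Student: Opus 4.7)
The plan is to observe that a Hermitian form $h = h_{jk} X^j \cj{X}^k$ is precisely a mixed tensor field of the type already covered by Proposition \ref{ConstantCoeffTensorFields}, namely one with $m = p = 0$ and $n = q = 1$. Consequently the corollary will follow as an essentially immediate specialization of that proposition, and no new machinery is required. The only thing to check is that the specific frames appearing in the statement (holomorphic $\{X^j\}$ and antiholomorphic $\{\cj{X}^k\}$ covector fields) are indeed left-invariant, so that the general bookkeeping of Proposition \ref{ConstantCoeffTensorFields} applies verbatim.

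For each $g \in G$ let $\Phi_g \colon G \to G$ denote left translation by $g$. By Corollary \ref{frameOfHoloVF} the holomorphic covector fields $\{X^j\}_{j=1}^{d+1}$ satisfy $\Phi_g^* X^j = X^j$, and by Remark \ref{frameOfAntiVF} their antiholomorphic counterparts $\{\cj{X}^k\}_{k=1}^{d+1}$ satisfy $\Phi_g^* \cj{X}^k = \cj{X}^k$ as well. Since $\Phi_g^*$ distributes over tensor products and over multiplication by smooth scalars, pulling back the expansion of $h$ gives
\begin{equation*}
\Phi_g^* h \;=\; \Phi_g^*(h_{jk})\, X^j \otimes \cj{X}^k.
\end{equation*}
Comparing with $h$ itself, $\Phi_g^* h = h$ for every $g \in G$ if and only if $\Phi_g^*(h_{jk}) = h_{jk}$ for every $g \in G$; that is, iff each coefficient function $h_{jk}$ is invariant under the left action of $G$ on itself.

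The final step is the standard observation that $G$ acts transitively on itself by left multiplication, so a scalar function on $G$ is left-invariant exactly when it is constant. This yields the desired equivalence. I do not anticipate any real obstacle here: the only point of genuine content is the left-invariance of the antiholomorphic dual frame, which is precisely Remark \ref{frameOfAntiVF}. Everything else is the same bookkeeping already carried out in the proof of Proposition \ref{ConstantCoeffTensorFields}, now specialized to a form of bidegree $(1,1)$.
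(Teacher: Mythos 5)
Your proposal is correct and follows essentially the same route as the paper: the paper's own proof of Corollary \ref{InvariantHermitianMetricsHaveConstantCoeffs} simply repeats the pullback computation from Proposition \ref{ConstantCoeffTensorFields} in the special case $m=p=0$, $n=q=1$, using left-invariance of the frames $\{X^j\}$ and $\{\cj{X}^k\}$ and transitivity of the left action exactly as you do. Nothing is missing.
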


Lastly, the following Lemma is used implicitly (without mention) for the remainder of this paper.
\begin{lemma}
If $h$ is a Hermitian form on $G$, by definition we have $h(Y, \overline{Z}) = \overline{h(Z, \overline{Y})} \foreach Y,Z \in T_p G,\ p \in G$. Then this implies for arbitrary $V, W \in T_p^{(1,1)} G$ that $h(V,W) = \overline{h(\overline{W}, \overline{V})}$. 
\begin{proof}
Let $V = V_1 + V_2$ and $W = W_1 + W_2$, where $V_1, W_1 \in T_p G$ and $V_2, W_2 \in \overline{T_p G}$. Then we may express:
\begin{align*}
    h(V, W) &= \sum_{i,j \in \{1,2\}} h(V_i, W_j) \\
    \overline{h(\overline{W}, \overline{V})} &= \sum_{i,j \in \{1,2\}} \overline{h(\overline{W_j}, \overline{V_i})}
\end{align*}
But then note that:
\begin{align*}
    h(V_1, W_1) &= 0 = \overline{h(\overline{W_1}, \overline{V_1})}\\
    h(V_1, W_2) &= \overline{h(\overline{W_2}, \overline{V_1})}\\
    h(V_2, W_1) &= 0 = \overline{h(\overline{W_1}, \overline{V_2})}\\
    h(V_2, W_2) &= 0 = \overline{h(\overline{W_2}, \overline{V_2})}
\end{align*}
Thus indeed
\begin{equation*}
    h(V,W) = \overline{h(\overline{W}, \overline{V})}.
\end{equation*}
\end{proof}
\end{lemma}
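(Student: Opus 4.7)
The plan is to reduce the claim for complexified tangent vectors to the stated Hermitian symmetry on $T_pG$ via linear-algebraic decomposition. First, I would decompose $V = V_1 + V_2$ and $W = W_1 + W_2$ with $V_1, W_1 \in T_pG$ (holomorphic parts) and $V_2, W_2 \in \overline{T_pG}$ (antiholomorphic parts). The identity to prove is linear in an appropriate sense, so it suffices to verify it termwise on the four pairs obtained by expanding $h(V,W)$ and $\overline{h(\overline{W},\overline{V})}$.

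Next, I would invoke the local form $h = h_{ab}\, dz^a \otimes d\overline{z}^b$ to observe that $h$ takes a holomorphic vector in its first slot and an antiholomorphic vector in its second slot; consequently $h(V_1, W_1) = 0$ and $h(V_2, W_2) = 0$. By the same token, on the right-hand side, conjugating $\overline{W_1}$ and $\overline{V_1}$ gives antiholomorphic vectors, so $\overline{h(\overline{W_1}, \overline{V_1})} = 0$, and similarly $\overline{h(\overline{W_2}, \overline{V_2})} = 0$. Thus, two of the four terms in each expansion vanish identically, and the two expansions reduce to the mixed-type terms.

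For the remaining terms, I would apply the hypothesis $h(Y, \overline{Z}) = \overline{h(Z, \overline{Y})}$ with appropriate choices. For $h(V_1, W_2)$, write $W_2 = \overline{Z}$ for some $Z \in T_pG$ and $V_1 = Y$; the hypothesis gives $h(V_1, W_2) = \overline{h(Z, \overline{V_1})} = \overline{h(\overline{W_2}, \overline{V_1})}$, matching the corresponding term on the right. Similarly, $h(V_2, W_1) = \overline{h(\overline{W_1}, \overline{V_2})}$ by taking conjugates appropriately. Summing the four matched terms yields the desired identity.

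The main bookkeeping obstacle is the careful handling of the two anti-involutions at play: the complex conjugation sending $T_pG \leftrightarrow \overline{T_pG}$, and the conjugation of scalars appearing because $h$ is sesquilinear rather than bilinear. In particular, one must verify that the expansions of $h(V,W)$ and $\overline{h(\overline{W},\overline{V})}$ both distribute across the $V_i, W_j$ decomposition in the same way (with scalar conjugations cancelling on the right-hand side because of the outer bar), so that the termwise matching above is legitimate. Once this is set up cleanly, the proof is a short verification.
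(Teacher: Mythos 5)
Your proposal is correct and follows essentially the same route as the paper: decompose $V$ and $W$ into holomorphic and antiholomorphic parts, expand both sides termwise, discard the terms of the wrong type, and match the surviving term via the stated Hermitian symmetry. One small point: $h(V_2,W_1)$ is not a genuine mixed term to be matched by the hypothesis (whose slot types do not fit that pairing) --- it simply vanishes, exactly as $h(V_1,W_1)$ and $h(V_2,W_2)$ do, which is how the paper handles it.
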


%----------------------------------------------------------------------------------------------------------------------------------------------------------------
\section{Nonexistence of Invariant K\"{a}hler Metrics}\label{B3}
We first recall some definitions. 
\begin{definition}[Def. 3.1.1 in \cite{huybrechts2005complex}, pg 114]\label{Kahler}
We define the \textit{fundamental form} of a Hermitian metric $h = h_{ab} dz^a \otimes d\cj z^b$ to be 
\begin{equation*}
    \omega \coloneqq \frac{i}{2} h_{ab} dz^a \wedge d\cj z^b
\end{equation*}
\end{definition}
\begin{definition}[Def 3.1.6 in \cite{huybrechts2005complex}] A \textit{K\"{a}hler structure} is a hermitian structure $h$ for which the fundamental form $\omega$ is closed, i.e. $d\omega = 0$. In this case the fundamental $\omega$ form is called the K\"{a}hler form. 
\end{definition}
We now state the fundamental form of a left-invariant Hermitian metric in terms of our left-invariant frame. 
\begin{lemma} \label{omegaDer}
Let $h = \widehat{h}_{ab}X^a\cj X^b$ be a left-invariant Hermitian metric on a simply connected almost Abelian Lie group $G$. Then the fundamental form of $h$ will be
$$\omega = \frac{i}{2}\widehat{h}_{ab}X^a \wedge \cj X^b.$$
\end{lemma}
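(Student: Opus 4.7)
The plan is to reduce the claim to the coordinate definition of Definition \ref{Kahler} by a direct substitution, exploiting the fact that the left-invariant frame $\{X^a\}$ from Corollary \ref{frameOfHoloVF} consists of genuinely holomorphic 1-forms with holomorphic coefficient functions.

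First I would write each $X^a$ in the coordinate basis as $X^a = X^a_c\, dz^c$, where the matrix $(X^a_c)$ is read off from the rows of $\begin{pmatrix} e^{-tJ(\aleph)} & 0 \\ 0 & 1\end{pmatrix}$ in Corollary \ref{frameOfHoloVF}. All entries are entire functions of $t$, hence holomorphic on $G$. By Remark \ref{frameOfAntiVF}, the antiholomorphic counterpart is $\cj X^b = \overline{X^b_d}\, d\cj z^d$ with $\overline{X^b_d}$ antiholomorphic.

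Next I would expand the tensor expression for $h$ in the coordinate basis:
\begin{equation*}
    h = \widehat{h}_{ab}\,X^a \otimes \cj X^b = \bigl(\widehat{h}_{ab}\, X^a_c\, \overline{X^b_d}\bigr)\, dz^c \otimes d\cj z^d,
\end{equation*}
so the coordinate coefficients $h_{cd}$ of $h$ are $h_{cd} = \widehat{h}_{ab}\, X^a_c\, \overline{X^b_d}$. Applying Definition \ref{Kahler} directly gives
\begin{equation*}
    \omega = \tfrac{i}{2}\, h_{cd}\, dz^c \wedge d\cj z^d = \tfrac{i}{2}\, \widehat{h}_{ab}\, X^a_c\, \overline{X^b_d}\, dz^c \wedge d\cj z^d.
\end{equation*}

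Finally, since $X^a_c$ and $\overline{X^b_d}$ are scalar functions, bilinearity of the wedge product yields
\begin{equation*}
    X^a \wedge \cj X^b = X^a_c\, \overline{X^b_d}\, dz^c \wedge d\cj z^d,
\end{equation*}
and therefore $\omega = \tfrac{i}{2}\,\widehat{h}_{ab}\, X^a \wedge \cj X^b$, as claimed. There is no real obstacle here; the only thing to be careful about is ensuring that the frame really consists of holomorphic (resp.\ antiholomorphic) 1-forms so that the change-of-frame calculation stays inside the $(1,1)$-part and no extra $dz \wedge dz$ or $d\cj z \wedge d\cj z$ cross-terms appear. This is guaranteed by the explicit formula in Corollary \ref{frameOfHoloVF} together with Remark \ref{frameOfAntiVF}, so the lemma follows from a one-line substitution.
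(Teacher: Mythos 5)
Your proof is correct, and it takes a slightly different route from the paper's. You work entirely in coordinates: expand the frame as $X^a = X^a_c\,dz^c$, read off the coordinate coefficients $h_{cd} = \widehat{h}_{ab}X^a_c\overline{X^b_d}$, apply Definition \ref{Kahler} verbatim, and then reassemble the wedge by bilinearity. The paper instead first proves the frame-independent tensor identity $\omega = \tfrac{i}{2}(h - \overline{h})$, which it obtains by expanding $dz^a \wedge d\cj{z}^b = dz^a \otimes d\cj{z}^b - d\cj{z}^b \otimes dz^a$ and invoking the Hermitian symmetry $h_{ab} = \overline{h_{ba}}$; it then substitutes $h = \widehat{h}_{ab}X^a\cj{X}^b$ and uses the Hermitian symmetry of $\widehat{h}_{ab}$ a second time to recombine the result into $\tfrac{i}{2}\widehat{h}_{ab}X^a \wedge \cj{X}^b$. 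Your argument never needs Hermitian symmetry at all --- it is a pure change-of-frame computation --- and you correctly isolate the one genuine hypothesis that makes it a one-line substitution, namely that the frame from Corollary \ref{frameOfHoloVF} is of pure type $(1,0)$ with holomorphic coefficients (so $\cj{X}^b$ is of type $(0,1)$ and no $dz \wedge dz$ or $d\cj{z} \wedge d\cj{z}$ terms can appear). What the paper's route buys is the intermediate identity $\omega = \tfrac{i}{2}(h - \overline{h})$, which exhibits the fundamental form in a manifestly coordinate-free way; what yours buys is brevity and the explicit observation that the statement is really just the assertion that the definition of $\omega$ is insensitive to replacing the coordinate coframe by any holomorphic one.
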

\begin{proof}
    From definition \ref{Kahler} we have $\frac{i}{2}h_{ab}dz^a\wedge d\cj z^b$. Recall (see \S3.7 in \cite{tu2007introduction}) that for one forms, say $dz^a$ and $d\cj z^b$, the wedge product is
\begin{align*}
    (dz^a \wedge d\cj z^b)(V_1, V_2) &= \sum_{\sigma \in S_{2}} \sgn(\sigma) dz^a(V_{\sigma(1)}) d\cj z^b(V_{\sigma(2)}) \\
    &= dz^a(V_1) d\cj z^b(V_2) - d\cj z^b(V_1) dz^a(V_2).
\end{align*}
i.e., $dz^a \wedge d\cj{z}^b = dz^a\otimes d\cj z^b-d\cj z^b\otimes dz^a$. We then calculate the following expression for $\omega$,
    \begin{align*}
        \omega &= \frac{i}{2} h_{ab} dz^a\wedge d\cj z^b \\
        &= \frac{i}{2} [h_{ab}dz^{a}\otimes d\cj z^b - h_{ab}d\cj z^b\otimes dz^a]\\
        &= \frac{i}{2} [h_{ab}dz^{a}\otimes d\cj z^b - \overline{\overline{h_{ab}}dz^b\otimes d\cj z^a }]\\
        &= \frac{i}{2} [h_{ab}dz^{a}\otimes d\cj z^b - \overline{h_{ba}dz^b\otimes d\cj z^a }]\\
        &= \frac{i}{2} [h_{ab}dz^{a}\otimes d\cj z^b - \overline{h_{ab}dz^a\otimes d\cj z^b }] \\
        &= \frac{i}{2}[h - \cj{h}].
    \end{align*}
    Note that $h=\widehat{h}_{ab}X^a\cj X^b$, so we further have 
    \begin{align*}
        \omega &= \frac{i}{2}[h - \cj{h}] \\
        &= \frac{i}{2}[\widehat{h}_{ab}X^a\cj X^b-\overline{\widehat{h}_{ab}X^a\cj X^b}] \\ 
        &=  \frac{i}{2}[\widehat{h}_{ab}X^a\cj X^b - \overline{\widehat{h}_{ab}}\cj{X}^a X^b] \\
        &= \frac{i}{2}[\widehat{h}_{ab}X^a\cj X^b-\widehat{h}_{ba}\cj X^a X^b] \\
        &= \frac{i}{2}[\widehat{h}_{ab}X^a\cj X^b-\widehat{h}_{ab}\cj X^b X^a].
    \end{align*}
    Therefore, $\omega = \frac{i}{2}\widehat{h}_{ab}X^a\wedge \cj X^b$ and is the desired form. 
\end{proof}

%----------------------------------------------------------------------------------------------------------------------------------------------------------------
We now state the main result for this section.
\begin{manualtheorem}{1.1}[Nonexistence of K\"{a}hler Metrics]
There are no left-invariant K\"{a}hler metrics on a simply connected almost Abelian group. 
\end{manualtheorem}
We provide two different proofs of Theorem \ref{NonexistKahlerSimpConn}.

%----------------------------------------------------------------------------------------------------------------------------------------------------------------

\begin{proof}[Proof I of Theorem \ref{NonexistKahlerSimpConn}]

Let $h$ be an Hermitian structure on a simply connected almost abelian group $G$. We have from Lemma \ref{omegaDer} that 
$$\omega=\frac{i}{2}\widehat{h}_{ab}X^a\wedge \cj X^b,$$
or in other terms: 
\begin{equation*}
    \omega = \widehat{\omega}_{ij}X_a^i \cj X^j_b dz^a \land d\cj z^b,
\end{equation*}
where $\widehat{\omega}_{ij}=\frac{i}{2}\widehat{h}_{ij}$. Define $\widehat{\omega}$ to be the matrix whose $i^{\text{th}}$ row and $j^{\text{th}}$ column is $\widehat{\omega}_{ij}$. Note that from the definition of $\omega$, we see that $\overline{\omega}^\top = \omega$, and so $\overline{\widehat{\omega}}^\top = \widehat{\omega}$. Note also that since $h$ is nondegenerate, $\widehat{h}=(\widehat{h}_{ij})_{1 \leq i,j \leq n}$ is nondegenerate, and thus $\frac{i}{2}\widehat{h}=\widehat{\omega}$ is nondegenerate. If in addition, $\omega$ is closed, i.e. $d\omega=0$, then we call $h$ a Kähler structure.  Assume for contradiction that $h$ is Kähler, and thus $\omega$ is closed.

Define $n=d+1$ to be the dimension of the almost abelian group, hence also the dimension of the matrix 
$$X \coloneqq \begin{pmatrix}
e^{-tJ(\aleph)}&0\\
0& 1
\end{pmatrix}.$$
We name this matrix $X$ since the left-invariant covector field frame is formed by taking the rows of the matrix. Hence $X^i_a$ corresponds to the entry in the $i$-th row and $a$-th column of the matrix $X$.

Then we take the exterior derivative of $\omega$. By Prop. 2.6.15 in \cite{huybrechts2005complex}, we know we may write $d = \partial + \overline{\partial}$ (see pgs. 43-44 of \cite{huybrechts2005complex} for the definitions of $\partial$ and $\overline{\partial}$). Thus we have:
\begin{align*}
    0=d\omega&=(\partial+\cj \partial)\omega\\
    &=\partial (\widehat{\omega}_{ij}X^i_a\cj X^j_b dz^a\wedge d\cj z^b)+\cj\partial (\widehat{\omega}_{ij}X^i_a\cj X^j_b dz^a\wedge d\cj z^b)\\
    &=\widehat{\omega}_{ij}\partial(X^i_a \cj X^j_b dz^a\wedge d\cj z^b)+\widehat{\omega}_{ij}\cj\partial(X^i_a \cj X^j_b dz^a\wedge d\cj z^b)\\
    &=\widehat{\omega}_{ij}(\cj X^j_b \pdv{z^\ell}X_a^i+X^i_a\pdv{z^\ell}\cj X^j_b)dz^\ell\wedge dz^a\wedge d\cj z^b\\
    &\qquad +\widehat{\omega}_{ij}(\cj X^j_b \pdv{\cj z^\ell}X^i_a+X^i_a\pdv{\cj z^\ell}\cj X^j_b)d\cj z^\ell \wedge dz^a\wedge d\cj z^b.
\end{align*}
Now we set
\begin{align*}
    I:&=\widehat{\omega}_{ij}(\cj X^j_b \pdv{z^\ell}X_a^i+X^i_a\pdv{z^\ell}\cj X^j_b)dz^\ell\wedge dz^a\wedge d\cj z^b\\
    II:&=\widehat{\omega}_{ij}(\cj X^j_b \pdv{\cj z^\ell}X^i_a+X^i_a\pdv{\cj z^\ell}\cj X^j_b)d\cj z^\ell \wedge dz^a\wedge d\cj z^b.
\end{align*}

We can further simplify these expressions. We first simplify the $I$ in the equation above. Notice that the only possible variable in any entry of $X$ is $t = z^n$, thus for all $\ell < n$ we have $\frac{\partial}{\partial z^\ell}X^i_a=0$. Thus $I$ becomes
$$
\widehat{\omega}_{ij}(\cj X^j_b \pdv{z^n}X^i_a+X^i_a\pdv{z^n}\cj X^j_b)dz^n\wedge dz^a\wedge d\cj z^b.
$$

Further, since $\cj X^j_b$ is antiholomorphic for all $j,b$, we have that $\frac{\partial}{\partial z^n}\cj X^j_b=0$. Thus the expression above further simplifies to
$$
\widehat{\omega}_{ij}\cj X^j_b \pdv{z^n}X^i_a dz^n\wedge dz^a\wedge dz^b.
$$
Now we simplify $II$. By similar argument, one can show that $II$ simplifies to
$$
\widehat{\omega}_{ij}\pdv{\cj z^n}\cj X^j_b d\cj z^n\wedge dz^a\wedge dz^b.
$$
Put all these together, we arrive at $d \omega=0$ is equivalent to
\begin{equation}\label{SystemOfPDEs}
    \widehat{\omega}_{ij}\cj X^j_b \pdv{z^n}X^i_a dz^n\wedge dz^a\wedge dz^b+\widehat{\omega}_{ij}\pdv{\cj z^n}\cj X^j_b d\cj z^n\wedge dz^a\wedge dz^b = 0.
\end{equation}
This is our systems of PDEs. Now let us discuss by cases.
\begin{enumerate}[label=(\roman*)]
    \item When $a = n$, the first term in \eqref{SystemOfPDEs} vanishes, implying the subsystem of PDEs:
    \begin{equation} \label{CaseiPDEs}
        \widehat{\omega}_{ij}X^i_a \pdv{\cj z^n}\cj X^j_b=0.
    \end{equation}

    \item When $b = n$, the second term in \eqref{SystemOfPDEs} vanishes, implying the subsystem of PDEs:
    \begin{equation} \label{CaseiiPDEs}
        \widehat{\omega}_{ij}\cj X^j_b \pdv{z^n}X^i_a=0.
    \end{equation}

    \item  When $a \neq n$ and $b \neq n$, then since all possible wedge product of 3 1-forms form a basis of the space $\Omega^3(G_\mathbb{R})$, they are linearly independent. Hence each of the coefficient function in \eqref{SystemOfPDEs} must equal zero. Therefore we get the following subsystem of \eqref{SystemOfPDEs}:
    \begin{align} 
        \widehat{\omega}_{ij}\cj X^j_b \pdv{z^n}X^i_a &=0\label{CaseiiiaPDEs}\\
        \widehat{\omega}_{ij}X^i_a\pdv{\cj z^n}\cj X^j_b&=0.\label{CaseiiibPDEs}
    \end{align}
\end{enumerate}

%----------------------------------------------------------------------------------------------------------------------------------------------------------------

We would like to turn the results above (still case by case) into matrix equations. We define the quantity
$$
\Gamma_{ab}=\widehat{\omega}_{ij}X^i_a\cj X^j_b.
$$
In case (iii) above, we have \eqref{CaseiiiaPDEs}, \eqref{CaseiiibPDEs} as our systems of PDEs. For \eqref{CaseiiiaPDEs}, we can slightly modify it:
$$
\widehat{\omega}_{ij}\cj X^j_b \pdv{z^n}X^i_a+{\widehat{\omega}_{ij}X_a^i\pdv{z^n}\cj X^j_b}=
\frac{\partial}{\partial z^n}\left(\widehat{\omega}_{ij}X^i_a\cj X^j_b\right)=\frac{\partial}{\partial z^n}\Gamma_{ab}=0,
$$
where from the first equality to the second, the first term is zero by \eqref{CaseiiiaPDEs}, and the second term is zero by anti-holomorphicity. Therefore, equation \eqref{CaseiiiaPDEs} is equivalent to
\begin{equation}\label{GammaHoloPDE}
    \frac{\partial}{\partial z^n}\Gamma_{ab}=0.
\end{equation}
Performing similar operations, \eqref{CaseiiibPDEs} can be modified into
\begin{equation}\label{GammaAntiHoloPDE}
    \frac{\partial}{\partial\cj z^n}\Gamma_{ab}=0.
\end{equation}

%----------------------------------------------------------------------------------------------------------------------------------------------------------------

%----------------------------------------------------------------------------------------------------------------------------------------------------------------

\noindent We try to express the matrix $\Gamma$ whose entries are $\Gamma_{ab}$ using $X$ and $\widehat{\omega}$ whose entries are $\widehat{\omega}_{ij}$. We have
$$
\Gamma_{ab}=\widehat{\omega}_{ij}X^i_a\cj X^j_b=\widehat{\omega}_{ij}\cj X^j_b X^i_a,
$$
where the second equality is commuting two complex numbers. The operation $\sum_j\widehat{\omega}_{ij}\cj X^j_b$ is clearly the entry corresponding to the $i$-th row and $b$-th column of the matrix $\widehat{\omega}\cj{X}$. Hence we define the matrix $\Xi \coloneqq \widehat{\omega}\cj{X}$. Then
$$
\widehat{\omega}_{ij}\cj X^j_b=\Xi_{ib}.
$$
Then we have
$$
\Gamma_{ab}=\Xi_{ib}X^i_a=(\Xi^\top X)_{ba}=(\cj{X}^\top \widehat{\omega}^\top X)_{ba}.
$$
Transposing the entire equation once again we find
$$\Gamma_{ab}=(X^\top \widehat{\omega}\cj{X})_{ab}.$$ 
Or equivalently,
\begin{equation}\label{matrixGamma}
    \Gamma = X^\top \widehat{\omega} \cj{X}.
\end{equation}
Recall that $\cj{\omega}^\top = \omega$. Then from \eqref{matrixGamma}, it follows that $\cj{\Gamma}^\top = \Gamma$. Substitute (\ref{matrixGamma}) into the cases ((i)-(iii)) from the previous section, whose systems of PDEs are translated into \eqref{GammaHoloPDE} and \eqref{GammaAntiHoloPDE}. Then the work translates:
\begin{enumerate}[label=(\roman*)]
    \item When $a = n$, we have that $\Gamma_{nb}$ must satisfy the $nb$ component of the following equation:
    \begin{equation} \label{GammaPDEi}
        X^\top \widehat{\omega} \left( \overline{-J(\aleph)} \oplus 0 \right) \cj{X} = 0.
    \end{equation}
    
    \item When $b=n$, we have that $\Gamma_{an}$ must satisfy the $an$ component of the following equation:
    \begin{equation} \label{GammaPDEii}
        (-J(\aleph) \oplus 0)^\top X^\top \widehat{\omega} \cj{X} = 0.
    \end{equation}
    
    \item When $a \neq b$ and $b \neq n$, we have that $\Gamma_{ab}$ must satisfy the $ab$ component of the following equations:
    \begin{align}
        (-J(\aleph) \oplus 0)^\top X^\top \widehat{\omega} \cj{X} &= 0 \label{GammaPDEiiia}\\
        X^\top \widehat{\omega} \left( \overline{-J(\aleph)} \oplus 0 \right) \cj{X} &= 0. \label{GammaPDEiiib}
    \end{align}
\end{enumerate}

Thus we are searching for $\widehat{\omega}$ satisfying \eqref{GammaPDEi} only when $a=n$, \eqref{GammaPDEii} only when $b=n$, and satisfying \eqref{GammaPDEiiia} and \eqref{GammaPDEiiib} simultaneously for all other $(a,b)$. Note that \eqref{GammaPDEi} is the same equation as \eqref{GammaPDEiiib}, and \eqref{GammaPDEii} is the same equation as \eqref{GammaPDEiiia}. Thus going forward, we only refer to \eqref{GammaPDEiiia} and \eqref{GammaPDEiiib}.

Now note that \eqref{GammaPDEiiia} is equivalent to:
\begin{equation} \label{GammaiiiaEq}
    \left( -J(\aleph) \oplus 0 \right)^\top \Gamma = 0.
\end{equation}
Similarly, since $\cj{X} = e^{-t\overline{J(\aleph)}} \oplus 1$ and the exponential is in particular a series in $\overline{J(\aleph)}$, it follows that $\overline{-J(\aleph)} \oplus 0 $ commutes with $\overline{X}$. Thus \eqref{GammaPDEiiib} is equivalent to:
\begin{equation} \label{GammaiiibEq}
    \Gamma \left( \overline{-J(\aleph)} \oplus 0 \right) = 0.
\end{equation}
But then we may take the conjugate transpose of \eqref{GammaiiiaEq}, which yields \eqref{GammaiiibEq}, and thus \eqref{GammaPDEiiia} is equivalent to \eqref{GammaPDEiiib}. This is very nice because it means we may pick one of these two equivalent equations, say \eqref{GammaPDEiiia}, and it is the condition that all three of our cases ((i) - (iii)) must satisfy. Thus our entire system of PDEs reduces to the singular matrix equation (and no cases), which is \eqref{GammaPDEiiia}.

Like before, observe that $X = e^{-tJ(\aleph)} \oplus 1$ implies that $\left(-J(\aleph) \oplus 0 \right)^\top$ commutes with $X^\top$. Thus we have that \eqref{GammaPDEiiia} is equivalent to:
\begin{align}
    0 &= (-J(\aleph) \oplus 0)^\top X^\top \widehat{\omega} \cj{X} \\
    &= X^\top (-J(\aleph) \oplus 0)^\top \widehat{\omega} \cj{X}. \label{sec} 
\end{align}
Now, use the fact that $X$ is invertible to multiply out $X^\top$ and $\cj{X}$ \eqref{sec} to get:
\begin{equation} \label{ContradictionEq}
    (-J(\aleph) \oplus 0)^\top \widehat{\omega} = 0.
\end{equation}

But then recall that $\widehat{\omega}$ is non-degenerate as a consequence of its definition in terms of the Hermitian form $h$. Thus we may multiply both sides of \eqref{ContradictionEq} by $\widehat{\omega}\inv$ to get 
\begin{equation}
    (-J(\aleph) \oplus 0)^\top = 0,
\end{equation}
which is a contradiction unless $J(\aleph) = 0$. But then if $J(\aleph) = 0$, we would have that $G$ is actually Abelian, a contradiction to the almost Abelian condition.
\end{proof}

%----------------------------------------------------------------------------------------------------------------------------------------------------------------

\begin{proof}[Proof II of Theorem \ref{NonexistKahlerSimpConn}]
Here we present an alternative proof by contradiction to show that there are no K\"{a}hler structures on complex simply connected almost Abelian Lie groups. Let $G$ be a simply connected complex almost Abelian group of dimension $d+1$ and $\mathfrak{g}$ be it's Lie algebra. Left-invariant vector fields on $G$ can be identified with tangent vectors at the identity, forming the Lie algebra $\mathfrak{g}$.

Let $h$ be a left-invariant Hermitian structure on a simply connected almost Abelian group $G$. We have by Lemma \ref{omegaDer}
$$\omega = \frac{i}{2}\widehat{h}_{ij}X^i\wedge \cj X^j.$$
From here we identify $\omega$ with the underlying real structure of $G$. We take the frame to be $\{X_1, \dots, X_{d+1}, \cj X_1, \dots, \cj X_{d+1} \}$ 
Where $\{X_i\}^{d+1}_{i=1}$ and $\{\cj X_i\}^{d+1}_{i=1}$ are the basis of our left-invariant holomorphic and antiholomorphic frames respectively, here when evaluated at some point $p \in M$ are thought of as real tangent vectors in $T_pM_{\R}$. We introduce some notation for our frame, $i,j,k$ refer specifically to the index of the holomorphic or antiholomorphic frame elements and we use $r, s, t$ as running indices through our entire frame, that is:
$$X_s := \begin{cases} X_s &\quad \mbox{if } s \leq d+1 \\
\cj{X}_ {s-d-1} &\quad \mbox{if } s > d+1 
\end{cases}.$$
When using indices to refer to matrix elements, we let $\overline{i} := d + 1 + i$ and again use $r, s, t$ as running indices through the entire matrix.
As we have already shown for an invariant Hermitian structure $\widehat{h_{st}}$ must be constant we further have that $\omega$ will also have constant matrix components. We have
$$\omega(X_i, X_j) = 0, \;\; \omega(\cj X_i,\cj X_j) = 0,$$
$$\omega(X_i, \cj X_j) := \omega_{i\overline{j}}, \; \; \omega(\overline{X_i}, X_j) := \omega_{\overline{i}j}.$$
Where $\omega_{i\overline{j}}, \omega_{\overline{i}j}$ are constant. By this notational convention, $\omega_{\overline{i}\overline{j}} = \omega_{\overline{i}\overline{j}} = 0$.
As $\omega$ is a 2-form, we may employ proposition 4.1.6 of \cite{RuSc13}, page 170 to see that the formula for the exterior derivative simplifies to
\begin{align}
 (d\omega)(X_r, X_s, X_t) &= X_r(\omega(X_s, X_t)) - X_s(\omega(X_r, X_t)) + X_t(\omega(X_r, X_s)) \nonumber \\ 
 & \qquad \qquad \quad - \omega([X_r, X_s], X_t) + \omega([X_r, X_t], X_s) - \omega([X_s, X_t], X_r) \nonumber \\ 
 &= - \omega([X_r, X_s], X_t) + \omega([X_r, X_t], X_s) - \omega([X_s, X_t], X_r), \label{simpD}
\end{align}
as $\omega(X_s, X_t)$ are all constants. Recall that $[X_i, \cj X_j] = 0$, that $[X_i, X_j]$ will be holomorphic, and $[\cj X_i,\cj X_j]$ will be anti-holomorphic, i.e. $\omega([X_i, X_j], X_k) = 0$ and $\omega([\cj X_i,\cj X_j], \cj X_k) = 0$. The exterior derivative then further simplifies depending on how many holomorphic or anti-holomorphic frame elements are being evaluated.
\begin{align*}
 (d\omega)(X_i, X_j, X_k) &= - \omega([X_i, X_j], X_k) + \omega([X_i, X_k], X_j) - \omega([X_j, X_k], X_i) = 0, \\
 (d\omega)(\cj X_i,\cj X_j,\cj X_k)&=-\omega([\cj X_i,\cj X_j],\cj X_k)+\omega([\cj X_i,\cj X_k],\cj X_j)-\omega([\cj X_j,\cj X_k],\cj X_i)=0, \\
 (d\omega)(\cj X_i, X_j, X_k) &= - \omega([\cj X_i, X_j], X_k) + \omega([\cj X_i, X_k], X_j) - \omega([X_j, X_k], \cj X_i) = - \omega([X_j, X_k], \cj X_i),\\
 (d\omega)(\cj X_i,\cj X_j,X_k)&=-\omega([\cj X_i,\cj X_j],X_k)+\omega([\cj X_i,X_k],\cj X_j)-\omega([\cj X_j,X_k],\cj X_i)=-\omega([\cj X_i,\cj X_j],X_k).
\end{align*}
Recall that we may permute the entries of $d\omega$ so this indeed covers all cases. By way of contradiction we suppose $d\omega = 0$. Thus, 
\begin{align}
     \omega([X_j, X_k], \cj X_i) &= 0, \label{2holo}\\
     \omega([\cj X_i, \cj X_j], X_k) &= 0. \label{2anti}
\end{align}

We pick the basis $\{V_1, \dots, V_d, V_{d+1}\}$ for the Lie algebra $\mathfrak{g}$ such that $\{V_1, \dots, V_d\}$ generate the Abelian ideal, i.e. $V_{d+1} = e_0$. We further identify this with a basis for the real Lie algebra $\mathfrak{g_{\R}}$ as follows 
$$\{V_1, \dots, V_d, V_{d+1}, \cj V_1,...,\cj V_d,\cj V_{d+1}\}.$$
We have the simplification that, 
\begin{align*}
    [V_i, V_j] &= 0 \qquad 1 \leq i, j \leq d \\
    [\cj V_i,\cj V_j] &= 0 \qquad 1 \leq i, j \leq d
\end{align*}
So only terms involving $[e_0, V_i]$ and $[\cj{e}_0, \cj V_i]$ are left in the expressions \eqref{2holo} \eqref{2anti} respectively. Namely, it simplifies to 
\begin{align}
    \omega([e_0, V_i], \cj V_j) &= 0, \qquad 1 \leq i, j \leq d+1, \label{enot}\\
     \omega([\cj e_0, \cj V_i], V_j) &= 0, \qquad 1 \leq i, j \leq d+1, 
\end{align}
Recall that we already have that 
\begin{align*}
    \omega([e_0, V_i], V_j) = \omega([e_0, \cj V_i], V_j) = \omega([e_0, \cj V_i], \cj V_j) =0, \qquad 1 \leq i,j \leq d+1 . 
\end{align*}
As $[X,Y] = \ad_X Y$ the condition from \eqref{enot} can be written 
$$\omega(\ad_{e_0}(V_s), (V_t) ) = 0, \quad 1 \leq s, t \leq 2d+2.$$ 
So we have that $\omega$ an anti-symmetric bilinear form such that
\begin{equation} \label{adj}
    \omega(\ad_{e_0}(-), (-) ) = 0.
\end{equation} 
Now take some $V_s \in \{V_1, \dots, V_{d+1}, \cj V_1,...,\cj V_{d+1}\}$, we have for all $V_t \in \{V_1, \dots, V_{d+1}, \cj V_1,...,\cj V_{d+1}\}$:
$$\omega(\ad_{e_0}(V_s), V_t) = 0.$$
Which implies that $\ad_{e_0}(V_s) = 0$ as $\omega$ is non-degenerate. Thus we have that for all $s \in [1, 2d+2]$, $\ad_{e_0}(V_s) = 0$. Recalling that $\ad_{e_0}$ completely determines the Lie Algebra structure as shown in \cite{Ave18}, we have that $G$ is Abelian which is a contradiction. 
\end{proof}

%----------------------------------------------------------------------------------------------------------------------------------------------------------------
%----------------------------------------------------------------------------------------------------------------------------------------------------------------

Analogously we have for right-invariant K\"{a}hler metrics:

\begin{theorem}
There do not exist any right-invariant K\"{a}hler metrics on simply connected complex almost Abelian groups. 
\end{theorem}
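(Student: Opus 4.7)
The plan is to mirror Proof II of Theorem \ref{NonexistKahlerSimpConn} using the right-invariant frames of Corollary \ref{frameOfHoloVF}. First I would establish the obvious right-invariant analog of Corollary \ref{InvariantHermitianMetricsHaveConstantCoeffs}: a Hermitian form $h$ is right-invariant if and only if its coefficients $\widehat{h}_{ab}$ in the right-invariant coframe (the rows of $\begin{pmatrix}\mathds{1} & -J(\aleph)v \\ 0 & 1\end{pmatrix}$ together with their conjugates) are constant. The proof is verbatim Corollary \ref{InvariantHermitianMetricsHaveConstantCoeffs} with the right action $\Psi_g(x) = xg$ in place of $\Phi_g$; since the right action is still transitive, $\Psi_g$-invariant scalar functions must be constant. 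Lemma \ref{omegaDer} transfers without change, so the fundamental form becomes $\omega = \tfrac{i}{2}\widehat{h}_{ab}\,Y^a \wedge \cj{Y}^b$ with constant $\widehat{h}_{ab}$, where $\{Y^a\}$ denotes the right-invariant holomorphic coframe.

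Next, let $\{Y_1,\dots,Y_{d+1}\}$ denote the dual right-invariant holomorphic frame, namely the left generator fields $\vL_X$ of Proposition \ref{GenVecFields}, and let $\{\cj{Y}_1,\dots,\cj{Y}_{d+1}\}$ be their antiholomorphic counterparts. Since $\omega$ evaluated on any pair from the extended frame is constant, Koszul's formula for $d\omega$ on three frame vectors---exactly as in \eqref{simpD}---kills all directional-derivative terms and leaves only
\[
(d\omega)(Y_r,Y_s,Y_t) = -\omega([Y_r,Y_s],Y_t) + \omega([Y_r,Y_t],Y_s) - \omega([Y_s,Y_t],Y_r).
\]
The only subtlety is that the Lie bracket of right-invariant vector fields realizes the \emph{opposite} Lie algebra, i.e.\ $[Y_X,Y_Z] = -Y_{[X,Z]}$ for $X,Z \in T_e G$; this overall sign is inconsequential to what follows. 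Choosing a basis $\{V_1,\dots,V_d,e_0\}$ with $\{V_1,\dots,V_d\}$ spanning the codimension-one Abelian ideal, brackets among the $V_i$'s and among the $\cj{V}_i$'s vanish, and $[Y_i,\cj{Y}_j]=0$ because the holomorphic and antiholomorphic tangent distributions commute on a complex Lie group. Hence $d\omega = 0$ collapses to
\[
\omega(\ad_{e_0} V,\, W) = 0 \quad \text{for all } V,W \in \mathfrak{g}_{\R}.
\]
Non-degeneracy of $\omega$ then forces $\ad_{e_0} \equiv 0$, contradicting the fact that $\mathfrak{g}$ is almost Abelian but not Abelian.

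I expect no genuine obstacle; the argument is symmetric between the two sides, and the bookkeeping reduces to a single overall sign in the bracket and a swap of frames from the $t$-dependent exponential frame to the $v$-dependent polynomial frame. A more conceptual one-line alternative is to apply Theorem \ref{NonexistKahlerSimpConn} to the opposite complex group $G^{\mathrm{op}}$: its Lie algebra $\mathfrak{g}^{\mathrm{op}}$ has the negated bracket, so the codimension-one Abelian ideal persists and $\mathfrak{g}^{\mathrm{op}}$ remains almost Abelian (and is still complex, since inversion is biholomorphic). Right-invariant Hermitian/K\"ahler structures on $G$ correspond bijectively to left-invariant ones on $G^{\mathrm{op}}$, so the nonexistence transfers immediately.
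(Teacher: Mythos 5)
Your proposal is correct and takes essentially the same route as the paper: the paper's own proof is a one-line remark that Proof II of Theorem \ref{NonexistKahlerSimpConn} carries over verbatim using the right-invariant frames of Corollary \ref{frameOfHoloVF} and Remark \ref{frameOfAntiVF}, and you have simply executed that mirroring explicitly, correctly flagging the only subtlety (the sign flip $[Y_X,Y_Z]=-Y_{[X,Z]}$ for right-invariant fields, which is harmless since the conclusion $\ad_{e_0}\equiv 0$ is sign-independent). Your closing observation that the result also follows by applying Theorem \ref{NonexistKahlerSimpConn} to the opposite group $G^{\mathrm{op}}$ is a valid and tidier shortcut not present in the paper.
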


\begin{proof}
We may also identify right-invariant vector fields on $G$ with tangent vectors at the identity, which form the Lie algebra $\mathfrak{g}$. Utalizing the right invariant tensor and vector field frames found in Corollary \ref{frameOfHoloVF} and Remark \ref{frameOfAntiVF}, the above proof may be applied to the case of right-invariant Hermitian metrics similarly.
\end{proof}

%%%%%%%%%%%%%%%%%%%%%%%%%%%%%%%%%%%%%%%%%%%%%%%%%%%%%%%%%%%%%
\;

%----------------------------------------------------------------------------------------------------------------------------------------------------------------

\section{Generalization to Connected Groups}\label{B4}

We use the fact that every connected Lie group $G$ is the quotient of a unique simply connected Lie group $\widetilde{G}$ (having the same Lie algebra) by a discrete subgroup $\Gamma$ in order to generalize our results on Hermitian and K\"{a}hler structures to connected complex almost Abelian groups. Namely, we know that the covering map is a quotient map by a discrete subgroup $\Gamma$.

\begin{proposition} \label{DiscQuotientHerm}
Let $\Gamma \subseteq \widetilde{G}$ be a discrete normal (equiv. central) subgroup. Let $q_\Gamma : \widetilde{G} \to G \coloneqq \widetilde{G} / \Gamma$ be the standard quotient map. Then the pullback of any Hermitian metric $h$ on $G$ is a  right-$\Gamma$-invariant Hermitian metric $\widetilde{h}$ on $\widetilde{G}$, and locally the pullback under $q_{\Gamma}\inv$ of any right-$\Gamma$-invariant Hermitian metric $\widetilde{h}$ on $\widetilde{G}$ yields a Hermitian metric on $G$.

\begin{proof}
Since $q_\Gamma$ is a smooth covering map, it is a local diffeomorphism. Thus $dq_\Gamma$ is an isomorphism between tangent spaces at each point, i.e., an invertible linear transformation. 
Let $h$ be an arbitrary Hermitian structure on $G$.
Then define $\widetilde{h}(Y,Z) \coloneqq q_{\Gamma}^* h$. Then
\begin{align*}
    \overline{\widetilde{h}(Y,Z)} &= \overline{h((q_{\Gamma})_* Y, (q_{\Gamma})_* Z)} \\
    &= h((q_{\Gamma})_* Z, (q_{\Gamma})_* Y) \\
    &= \widetilde{h}(Z,Y).
\end{align*}
Thus $\widetilde{h}$ has Hermitian symmetry at each point. Note that $\widetilde{h} = q_\Gamma^* h$ implies that $\widetilde{h}$ is smooth.

We denote elements of $G$ as equivalence classes $[p]$ of elements $p \in \widetilde{G}$. Now suppose there exists a vector $Y \in T_p^{(1,1)} \widetilde{G}$ such that $\widetilde{h}(Y,Z) = 0$ for each $Z \in T_p^{(1,1)} \widetilde{G}$. Then we must have $h((q_{\Gamma})_* Y, W) = 0$ for all $W \in T_{[p]}^{(1,1)} \widetilde{G}$ (because $(q_{\Gamma})_* Z = [dq_{\Gamma}(p)] Z$, and $[dq_{\Gamma}(p)]$ was established to be invertible for all $p \in \widetilde{G}$). Thus $(q_{\Gamma})_* Y = 0$ by the non-degeneracy of $h$. But then since once again $(q_{\Gamma})_* Y = [dq_{\Gamma}(p)] Y$ and $[dq_{\Gamma}(p)]$ is invertible, we  must have $Y = 0$. Thus $\widetilde{h}$ is non-degenerate. It is apparent $\widetilde{h}$ is positive by definition, and thus it is indeed positive definite. Thus $\widetilde{h}$ is a Hermitian metric, and so the pullback of any Hermitian metric on $G$ yields a Hermitian metric on $\widetilde{G}$.

For all right-$\Gamma$-invariant Hermitian metrics $\widetilde{h}$ on $\widetilde{G}$, define
\begin{equation*}
    h(Y_{[p]},Z_{[p]}) \coloneqq \widetilde{h}\left( [dq_\Gamma(p)]\inv Y_{[p]}, [dq_\Gamma(p)]\inv Z_{[p]} \right),
\end{equation*}
where it is understood that $[dq_\Gamma(p)]\inv : T_{[p]}^{(1,1)} \widetilde{G} \longrightarrow T_p^{(1,1)} \widetilde{G}$.
Now note that by the equivariance of $q_\Gamma$ with respect to the action of right multiplication by $\gamma \in \Gamma$, we have:
\begin{align*}
    h(Y_{[p\gamma]},Z_{[p \gamma]}) &= \widetilde{h}\left( [dq_\Gamma(p \gamma)]\inv Y_{p \gamma}, [dq_\Gamma(p \gamma)]\inv Z_{p\gamma} \right) \\
    &= \widetilde{h}\left( [dq_\Gamma(p)]\inv Y_{[p]}, [dq_\Gamma(p)]\inv Z_{[p]} \right) \\
    &= h(Y_{[p]}, Z_{[p]}).
\end{align*}
Therefore, $h$ is indeed well-defined. Then
\begin{align*}
    \overline{h(Y_{[p]},Z_{[p]})} &= \overline{\widetilde{h}([dq_\Gamma(p)]\inv Y_{[p]}, [dq_\Gamma(p)]\inv Z_{[p]})}\\ 
    &= \widetilde{h}([dq_\Gamma(p)]\inv Z_{[p]}, [dq_\Gamma(p)]\inv Y_{[p]})\\
    &= h(Z_{[p]}, Y_{[p]}).
\end{align*}
Thus $h$ has Hermitian symmetry at each point. Furthermore, $h = \widetilde{h} \circ ([dq_\Gamma(p)]\inv, [dq_\Gamma(p)]\inv)$ is smooth because it is the composition of smooth functions. 

Now suppose there exists $Y_{[p]} \in T_{[p]}^{(1,1)} \widetilde{G}$ such that $h(Y_{[p]},Z_{[p]}) = 0 \foreach Z_{[p]} \in T_{[p]}^{(1,1)} \widetilde{G}$. Similarly as earlier, we must have $\widetilde{h}([dq_\Gamma(p)]\inv Y_{[p]}, W) = 0 \foreach W \in T_p^{(1,1)} \widetilde{G}$, which occurs if and only if $[dq_\Gamma(p)]\inv Y_{[p]} = 0$. But then since $dq_\Gamma$ is invertible, we have $Y_{[p]} = 0$, and so $h$ is non-degenerate at each point. Thus $h$ is a Hermitian metric on $G$.
\end{proof}
\end{proposition}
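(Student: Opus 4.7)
The plan is to split the proposition into its two directions and handle each using the fact that $q_\Gamma : \widetilde{G} \to G = \widetilde{G}/\Gamma$ is a smooth covering map, hence a local diffeomorphism whose differential $dq_\Gamma(p) : T_p \widetilde{G} \to T_{[p]} G$ is a $\C$-linear isomorphism at every $p \in \widetilde{G}$. Note also that $\Gamma$ being a discrete normal subgroup of the connected group $\widetilde{G}$ forces $\Gamma \subseteq Z(\widetilde{G})$, so right multiplication $R_\gamma$ by $\gamma \in \Gamma$ preserves every fiber of $q_\Gamma$ and satisfies $q_\Gamma \circ R_\gamma = q_\Gamma$.

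For the first direction, I would let $h$ be a Hermitian metric on $G$ and set $\widetilde{h} := q_\Gamma^* h$. Smoothness is automatic from smoothness of $q_\Gamma$. Hermitian symmetry and positivity transfer pointwise from $h$ to $\widetilde{h}$ because pullback commutes with complex conjugation and is linear on fibers; explicitly, $\cj{\widetilde{h}(Y,Z)} = \cj{h(dq_\Gamma Y, dq_\Gamma Z)} = h(dq_\Gamma Z, dq_\Gamma Y) = \widetilde{h}(Z,Y)$. Non-degeneracy uses the fact that $dq_\Gamma(p)$ is an isomorphism, so $\widetilde{h}_p(Y,Z) = 0$ for all $Z$ forces $dq_\Gamma(p)Y = 0$, hence $Y = 0$. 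Finally, right-$\Gamma$-invariance is the one-line calculation $R_\gamma^* \widetilde{h} = R_\gamma^* q_\Gamma^* h = (q_\Gamma \circ R_\gamma)^* h = q_\Gamma^* h = \widetilde{h}$.

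For the second (local) direction, given a right-$\Gamma$-invariant Hermitian metric $\widetilde{h}$ on $\widetilde{G}$, I would use that every $[p] \in G$ has an evenly covered open neighborhood $U$ on which $q_\Gamma$ admits a smooth section $\sigma : U \to \widetilde{G}$, and define $h|_U := \sigma^* \widetilde{h}$. Hermitian symmetry, smoothness, positivity, and non-degeneracy all transfer because $\sigma$ is a diffeomorphism onto its image. Equivalently, one can define $h$ pointwise as $h_{[p]}(Y,Z) := \widetilde{h}_p\big( dq_\Gamma(p)^{-1} Y,\ dq_\Gamma(p)^{-1} Z \big)$ for some chosen preimage $p$, which is the formulation the author uses.

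The main (indeed the only) obstacle in the second direction is well-definedness: the ``inverse'' $q_\Gamma^{-1}$ is multivalued, and different preimages of $[p]$ differ by an element of $\Gamma$. Any two local sections $\sigma_1,\sigma_2$ over a connected $U$ are related by $\sigma_2 = R_\gamma \circ \sigma_1$ for some $\gamma \in \Gamma$, and then $\sigma_2^* \widetilde{h} = \sigma_1^* R_\gamma^* \widetilde{h} = \sigma_1^* \widetilde{h}$ precisely because of the assumed right-$\Gamma$-invariance. Thus right-$\Gamma$-invariance is exactly the compatibility condition needed to make the local pieces glue into a globally defined Hermitian metric on $G$; after that, all remaining checks are formal transfers of pointwise Hermitian properties across the local diffeomorphism $q_\Gamma$.
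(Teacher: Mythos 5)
Your proposal is correct and follows essentially the same route as the paper: pull back along the local diffeomorphism $q_\Gamma$ for the first direction, and use right-$\Gamma$-invariance (via the fiberwise $[dq_\Gamma(p)]\inv$, or equivalently your local sections) as exactly the well-definedness condition for the descent in the second. One small point in your favor: you explicitly verify that $q_\Gamma^* h$ is right-$\Gamma$-invariant via $R_\gamma^* q_\Gamma^* h = (q_\Gamma \circ R_\gamma)^* h = q_\Gamma^* h$, a check that the statement asserts but the paper's proof never actually carries out.
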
   

%----------------------------------------------------------------------------------------------------------------------------------------------------------------

\begin{corollary}
Let $\Gamma \subseteq \widetilde{G}$ be a discrete normal (equiv. central) subgroup. Let $q_\Gamma : \widetilde{G} \to G \coloneqq \widetilde{G} / \Gamma$ be the standard quotient map. Then the pullback of any left-$G$-invariant Hermitian metric $h$ on $G$ is a right-$\Gamma$-invariant, left-$\widetilde{G}$-invariant Hermitian metric $\widetilde{h}$ on $\widetilde{G}$, and locally the pullback under $q_{\Gamma}\inv$ of any right-$\Gamma$-invariant, left-$\widetilde{G}$-invariant Hermitian metric $\widetilde{h}$ on $\widetilde{G}$ yields a left-$G$-invariant Hermitian metric on $G$.

\begin{proof}
By Prop. \ref{DiscQuotientHerm}, all that there is to check is that the metric obtained on $G$ by a left-$\widetilde{G}$-invariant, right-$\Gamma$-invariant Hermitian metric $\widetilde{h}$ on $\widetilde{G}$ down to a left-invariant metric on $G$. But then this is apparent from the fact that there is a left action of $\widetilde{G}$ on the quotient by $\Gamma$, namely right $\Gamma$ multiplication on left coset representatives, and that $q_\Gamma$ is clearly equivariant with respect to this action.
\end{proof}
\end{corollary}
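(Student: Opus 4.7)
The plan is to offload the Hermitian-metric content entirely to Proposition \ref{DiscQuotientHerm} and to reduce the statement to a bookkeeping exercise about left-invariance. The single key observation is that $q_\Gamma$ is a Lie group homomorphism, and therefore it intertwines left translation on $\widetilde{G}$ with left translation on $G$: for every $\widetilde{g} \in \widetilde{G}$, writing $L_x$ for left multiplication by $x$, one has $q_\Gamma \circ L_{\widetilde{g}} = L_{q_\Gamma(\widetilde{g})} \circ q_\Gamma$.

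For the forward direction, suppose $h$ is a left-$G$-invariant Hermitian metric on $G$ and let $\widetilde{h} := q_\Gamma^* h$. Proposition \ref{DiscQuotientHerm} already gives that $\widetilde{h}$ is a right-$\Gamma$-invariant Hermitian metric, so the only thing left to verify is left-$\widetilde{G}$-invariance. I would compute, using functoriality of the pullback and the equivariance above,
\[
L_{\widetilde{g}}^* \widetilde{h} = L_{\widetilde{g}}^* q_\Gamma^* h = (q_\Gamma \circ L_{\widetilde{g}})^* h = (L_{q_\Gamma(\widetilde{g})} \circ q_\Gamma)^* h = q_\Gamma^* L_{q_\Gamma(\widetilde{g})}^* h = q_\Gamma^* h = \widetilde{h},
\]
where the penultimate equality uses the assumed left-$G$-invariance of $h$.

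For the converse, suppose $\widetilde{h}$ is a right-$\Gamma$-invariant, left-$\widetilde{G}$-invariant Hermitian metric on $\widetilde{G}$. Since $q_\Gamma$ is a local diffeomorphism, each point of $G$ has a neighborhood $U$ on which a smooth local inverse $q_\Gamma^{-1} : U \to \widetilde{G}$ exists, and Proposition \ref{DiscQuotientHerm} shows that $h := (q_\Gamma^{-1})^* \widetilde{h}$ glues into a well-defined Hermitian metric on $G$ (independence of the chosen local inverse is precisely right-$\Gamma$-invariance). To show $h$ is left-$G$-invariant, I fix $g \in G$, choose any lift $\widetilde{g} \in q_\Gamma^{-1}(g)$, and observe that locally $q_\Gamma^{-1} \circ L_g = L_{\widetilde{g}} \circ q_\Gamma^{-1}$; then the same functoriality-of-pullback calculation, now together with left-$\widetilde{G}$-invariance of $\widetilde{h}$, gives $L_g^* h = h$.

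The only real subtlety, and therefore the main obstacle to write up cleanly, is the local nature of $q_\Gamma^{-1}$ in the second direction: one must verify that the definition of $h$ and the computation of $L_g^* h$ are genuinely independent of the choice of local inverse and the choice of lift $\widetilde{g}$ of $g$. Both checks reduce immediately to right-$\Gamma$-invariance of $\widetilde{h}$, since any two lifts of the same point in $G$ differ by right multiplication by an element of $\Gamma$, so no actual work beyond the observation already made in Proposition \ref{DiscQuotientHerm} is required.
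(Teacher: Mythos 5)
Your proposal is correct and follows essentially the same route as the paper: both delegate all the Hermitian-metric verification to Proposition \ref{DiscQuotientHerm} and reduce the invariance claims to the equivariance of $q_\Gamma$ with respect to left translation ($q_\Gamma \circ L_{\widetilde{g}} = L_{q_\Gamma(\widetilde{g})} \circ q_\Gamma$). You are in fact somewhat more careful than the paper, which only discusses the descent direction and does not spell out the forward pullback computation or the independence of the choice of lift $\widetilde{g}$ and of local inverse (both of which you correctly reduce to right-$\Gamma$-invariance).
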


%----------------------------------------------------------------------------------------------------------------------------------------------------------------

Similarly for invariant K\"{a}hler metrics we have:

\begin{proposition}
Let $\Gamma \subseteq \widetilde{G}$ be a discrete normal (equiv. central) subgroup of the simply connected group $\widetilde{G}$. Let $q_\Gamma : \widetilde{G} \to G \coloneqq \widetilde{G} / \Gamma$ be the standard quotient map. Then all left-$G$-invariant K\"{a}hler metrics on $G$ pullback to left-$\widetilde{G}$-invariant K\"{a}hler metrics on $\widetilde{G}$. Consequentially, there are no left-$G$-invariant K\"{a}hler metrics on $G$.
\begin{proof}
Let $h$ be a Hermitian metric on $G$ with corresponding fundamental form $\omega$. Assume $h$ is K\"{a}hler, and so $d\omega = 0$. Now since the pullback commutes with the exterior derivative, we have that $0 = q_{\Gamma}^* 0 = q_{\Gamma}^* d \omega = d(q_{\Gamma}^* \omega)$. 

Let $\widetilde{h} \coloneqq q_{\Gamma}^* h$ be the Hermitian metric on $\widetilde{G}$ induced by the pullback (we know this is indeed a Hermitian metric by Prop. \ref{DiscQuotientHerm}). Let $\widetilde{\omega}$ be the corresponding fundamental form of $\widetilde{h}$. Thus if $q_{\Gamma}^* \omega = \widetilde{\omega}$, then we will have established that $d\widetilde{\omega} = 0$, implying $\widetilde{h}$ is a K\"{a}hler metric on $\widetilde{G}$. 

Written locally, we have $\Tilde{h} = q_{\Gamma}^*h = (h_{ab} \circ q_{\Gamma}) q_{\Gamma}^*(dz^a)\otimes q_{\Gamma}^*(d\cj z^b)$. Therefore the fundamental form $\widetilde{\omega}$ of $\widetilde{h}$ will be 
$$\widetilde{\omega} = \frac{i}{2}(h_{ab} \circ q_{\Gamma})q_{\Gamma}^*(dz^a)\wedge q_{\Gamma}^*(d\cj z^b).$$
Considering the pullback of $\omega$ we have 
\begin{align*}
    q_{\Gamma}^*\omega &= q_{\Gamma}^*(\frac{i}{2} h_{ab}dz^a \wedge d\cj z^b) \\
    &= q_{\Gamma}^*\left(\frac{i}{2}h_{ab} ( dz^a \otimes d\cj z^b - d\cj z^b \otimes dz^a )\right) \\
    &= \left(\frac{i}{2}h_{ab} \circ q_{\Gamma}\right) \left[ q_{\Gamma}^*(dz^a) \otimes q_{\Gamma}^*(d\cj z^b) - q_{\Gamma}^*(d\cj z^b) \otimes q_{\Gamma}^*(dz^a) \right] \\
    &= \frac{i}{2}(h_{ab} \circ q_{\Gamma})q_{\Gamma}^*(dz^a) \wedge q_{\Gamma}^*(d\cj z^b)= \widetilde{\omega}.
\end{align*}
\end{proof}
\end{proposition}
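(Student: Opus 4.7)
The plan is straightforward: combine the earlier corollary on pullback of left-invariant Hermitian metrics with the fact that the exterior derivative commutes with pullback to show that the K\"ahler property descends from $G$ to $\widetilde{G}$ via $q_\Gamma^*$, and then invoke Theorem~\ref{NonexistKahlerSimpConn} to conclude nonexistence on $G$.

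I would proceed in two steps. First, given a left-$G$-invariant K\"ahler metric $h$ on $G$ with fundamental form $\omega$, set $\widetilde{h}:=q_\Gamma^* h$ and let $\widetilde{\omega}$ denote its fundamental form. The previous corollary guarantees that $\widetilde{h}$ is a left-$\widetilde{G}$-invariant Hermitian metric on $\widetilde{G}$, so the only substantive thing to check is that it is K\"ahler. The key intermediate identity is $q_\Gamma^*\omega = \widetilde{\omega}$. I would verify this by writing $h = h_{ab}\,dz^a\otimes d\bar z^b$ in a local chart on $G$, expanding $\widetilde{h} = (h_{ab}\circ q_\Gamma)\,q_\Gamma^*(dz^a)\otimes q_\Gamma^*(d\bar z^b)$, and using that pullback is $\C$-linear and commutes with the antisymmetrization that turns tensor products into wedge products, giving
\begin{equation*}
q_\Gamma^*\omega \;=\; \frac{i}{2}(h_{ab}\circ q_\Gamma)\,q_\Gamma^*(dz^a)\wedge q_\Gamma^*(d\bar z^b) \;=\; \widetilde{\omega}.
\end{equation*}

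Second, since pullback commutes with the exterior derivative, the K\"ahler condition $d\omega=0$ immediately gives $d\widetilde{\omega} = d(q_\Gamma^*\omega) = q_\Gamma^*(d\omega) = 0$, so $\widetilde{h}$ is K\"ahler on $\widetilde{G}$. The ``consequentially'' clause then follows by contraposition: a left-$G$-invariant K\"ahler metric on $G$ would pull back to a left-$\widetilde{G}$-invariant K\"ahler metric on the simply connected almost Abelian group $\widetilde{G}$, directly contradicting Theorem~\ref{NonexistKahlerSimpConn}.

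There is no real obstacle to this argument; all the heavy lifting has been done in Prop.~\ref{DiscQuotientHerm}, its corollary, and Theorem~\ref{NonexistKahlerSimpConn}. The only step that requires careful bookkeeping is the naturality identity $q_\Gamma^*\omega = \widetilde{\omega}$, which is routine but worth writing out explicitly so that the reader sees both $q_\Gamma^*$ of the coefficient functions and $q_\Gamma^*$ of the basis one-forms being tracked through the antisymmetrization.
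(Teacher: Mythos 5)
Your proposal matches the paper's proof essentially step for step: pull back $h$ to $\widetilde{h}=q_\Gamma^*h$, verify $q_\Gamma^*\omega=\widetilde{\omega}$ by expanding the wedge as an antisymmetrized tensor product in local coordinates, use that $d$ commutes with pullback to get $d\widetilde{\omega}=0$, and conclude by Theorem~\ref{NonexistKahlerSimpConn}. The argument is correct and requires no changes.
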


\section{Acknowledgements}
This work was done as part of the University of California, Santa Barbara Mathematics Summer Research Program for Undergraduates and was supported by NSF REU Grant DMS 1850663. We are very grateful to both UCSB and the NSF for making this opportunity possible, and for the enriching, challenging, and fun experiences we had in the course of the program.

\newpage

\printbibliography

\end{document}